\documentclass[a4paper,11pt]{amsart}
\usepackage[utf8x]{inputenc}
\usepackage{graphicx, xcolor}
\usepackage{amsmath,amssymb,amsthm,comment,mathtools}
\usepackage[colorlinks,allcolors=blue]{hyperref} 
\usepackage{cleveref}
\usepackage{pdfpages}
\usepackage{amsmath}
\usepackage{subfigure,cite}
\usepackage{tikz}

\usepackage[a4paper,margin=2.6cm]{geometry}

\newcommand{\set}[1]{\left\{#1\right\}}
\newcommand{\sm}{\setminus}
\newcommand{\paren}[1]{\left(#1\right)}

\newcommand{\abs}[1]{\left\lvert #1 \right\rvert}
\newcommand{\FF}{\mathbb{F}}

\theoremstyle{plain}
\newtheorem{thm}{Theorem}[section]
\newtheorem{cor}[thm]{Corollary}
\newtheorem{prop}[thm]{Proposition}
\newtheorem{lem}[thm]{Lemma}

\theoremstyle{definition}
\newtheorem{ex}[thm]{Example}
\newtheorem{defi}[thm]{Definition}
\newtheorem{rmk}[thm]{Remark}

\numberwithin{equation}{section}
\title{Edge codes constructed from unicyclic graphs}

\author[S. Asensio]{Sara Asensio}
\address[S. Asensio] {Instituto de Investigaci\'on en Matem\'aticas (IMUVa), Universidad de Valladolid, Valladolid, Spain}
\email{sara.asensio@uva.es}

\author[G. Gaggero]{Giulia Gaggero}
\address[G. Gaggero] {Department of Mathematics and Statistics
McMaster University, Hamilton, ON L8S 4L8, Canada}
\email{gaggerog@mcmaster.ca}

\author[N. Ragunathan]{Naveena Ragunathan}
\address[N. Ragunathan]
{Department of Mathematics and Statistics
McMaster University, Hamilton, ON L8S 4L8, Canada}
\email{ragunatn@mcmaster.ca}

\author[A. Saha]{Abhilash Saha}
\address[A. Saha]
{Department of Mathematics and Statistics
McMaster University, Hamilton, ON L8S 4L8, Canada}
\email{sahaa25@mcmaster.ca}

\author[A. Van Tuyl]{Adam Van Tuyl}
\address[A. Van Tuyl]{Department of Mathematics and Statistics
McMaster University, Hamilton, ON L8S 4L8, Canada}
\email{vantuyla@mcmaster.ca}

\keywords{edge codes, evaluation codes, toric codes, unicyclic
graphs}
\subjclass[2020]{13P25, 5C65, 14G50, 94B27}
\date{\today}
\begin{document}

\begin{abstract} Jaramillo-Velez recently
introduced edge codes, a new class of toric evaluation codes
constructed from the edges of
a (hyper)graph $\mathcal{H}$.  In the
case that $\mathcal{H}$ is a tree,
Jaramillo-Velez computed both the 
minimum distance and the weight distribution of
the associated code.  
In this paper, we study edge codes associated to unicyclic graphs. 
Our most striking result is that computing the parameters of these codes is
subtle in the case that the induced cycle has an even length because
these values will depend on certain conditions regarding the 
length of the cycle and the size of the base field. 
\end{abstract}

\maketitle

%%%%%%%%%%%%%%%%%%%%%%%%%%%%%%%%%%%%%%

\section{Introduction}
Originally introduced by H{\o}holdt, van Lint and Pellikaan as a generalization of one-point algebraic geometry codes~\cite{HVLP98}, evaluation codes have evolved into a broad and active area of research. Evaluation
codes have been investigated in several algebraic frameworks, including affine variety codes and order domain codes~\cite{Gei08,GHM20}, and they are now studied in their own right~\cite{JVPV21}.

The codewords of an evaluation code are constituted by the evaluations of the elements of a polynomial vector space $\mathcal{L}\subseteq K[t_1,\ldots,t_s]$  on a geometric object $X \subseteq K^s$, where $K=\mathbb{F}_q$ is a finite field.  
A large family of evaluation codes is the family of toric codes introduced by Hansen in 2000~\cite{Han00}. In this case, $X$ is the affine torus $T=(K^*)^s$, and there are two main cases that have been studied with this choice of $X$. In the first case, the vector space $\mathcal{L}$ is derived from $d$-hypersimplices ~\cite{JVPV21,CP26,CP25,PS23}, and in the second case, it is generated by squarefree monomials~\cite{JVPV21,PS23}.

Recently, Jaramillo-Velez introduced a new family of toric codes~\cite{Jar25}. In this case $\mathcal{L}$ is parametrized by the set of edges of a hypergraph. 
Given any hypergraph $\mathcal{H}$ on the vertex set $V=\{t_1,\ldots,t_s\}$, its edges $E$ are subsets of $V$ of any cardinality. Given an edge $\{t_{i_1},\ldots,t_{i_\ell}\}\subseteq V$, one can associate to it a monomial $t_{i_1}\cdots t_{i_{\ell}}\in K[t_1,\ldots,t_s]$. Let $\mathcal{L}$ be the polynomial vector space $$KE(\mathcal{H}) = {\rm span}_{K}\{t_{i_1}\cdots t_{i_\ell} ~:~ \{t_{i_1},\ldots,t_{i_\ell}\} \in E\}.$$ Therefore, the \textit{evaluation map} for this family of codes is
$${\rm ev}: KE(\mathcal{H}) \rightarrow K^{|T|}  
,~~~f \mapsto (f(P_1),\ldots,f(P_{|T|})),$$ where $T=\{P_1,\dots,P_{|T|}\}$, and the corresponding \textit{edge toric code} is $\mathcal{C}_{\mathcal{H}} = {\rm ev}(KE(\mathcal{H}))\subseteq K^{|T|}$.

An edge toric code $\mathcal{C}_{\mathcal{H}}$ is a linear code of length $|T|$, i.e., it is a subspace of $K^{|T|}$. Thanks to the correspondence between the codewords and the polynomials of the $K$-vector space $KE(\mathcal{H})$, the dimension and the minimum distance of $\mathcal{C}_{\mathcal{H}}$ can also be related to properties of $\mathcal{H}$. In particular, it can be proved that studying the weight distribution of the edge code associated to a hypergraph $\mathcal H$ is equivalent to studying the zeroes of all polynomials in $KE(\mathcal H)$ over the affine torus. Studying zeroes of polynomials is a general problem in algebraic geometry and there are recent works focused on this, see e.g. ~\cite{AP25, NSJ26}.

Jaramillo-Velez provided estimations for the minimum distance of edge codes coming from $d$-uniform clutters and computed the minimum distances of all edge codes of graphs with five vertices. Additionally, 
given a tree $G$ on $s$ vertices, he computed the number of zeroes of every polynomial in $KE(G)$.   This allows one to compute the weight distribution
of $\mathcal{C}_G$ when $G$ is a tree. While this is not explicitly done
in \cite{Jar25}, we provide the details in Theorem \ref{thm:WeightDistTrees}.

The goal of this note is to extend Jaramillo-Velez's work
to a larger family, namely the family of {\it unicyclic graphs}, that is
graphs with exactly one induced cycle.  In particular,
we consider the problem of computing the minimum
distance and weight distribution of codes constructed from unicyclic graphs,
see Theorems \ref{thm:zeroesOddUnicyclic} and \ref{thm:zeroesEvenUnicyclic}.
Interestingly, the weight distribution of $\mathcal{C}_G$  will depend 
upon the parity of the size of the induced cycle in $G$.      
When the induced cycle has an odd
length, then the number of zeroes of $f \in KE(G)$ will only depend
upon the number of terms in $f$, but not the coefficients of $f$.
This result contrasts to the case that there is an even induced cycle in $G$
because the number of zeroes of $f \in KE(G)$ will depend both on
the number of terms in $f$ and its coefficients.

A similar question has been answered by Carvalho and Patanker in~\cite{CP26} for toric codes defined over hypersimplices. They computed the number of minimal and next-to-minimal codewords for a large family of hypersimplices. Notice that, even if their result can be considered more general because it applies to a larger number of codes, ours is more complete because we give the full weight distribution and not only the first two values of the distribution for
edge toric codes constructed from unicylic graphs.

The paper is organized as follows.    In Section \ref{sec.background} we 
recall the remaining background.  In Section \ref{sec:leaf},
we consider the relationship between the edge toric code
of a graph $G$ with a leaf $t$ with that of $G \setminus \{t\}$, the graph with 
the leaf removed.  This analysis leads to Theorem ~\ref{thm:attachingtrees}, 
a general technique to remove leaves from a graph $G$ to create 
a new graph $G''$ such that we can relate the zeroes of $f \in KE(G)$ with
those of a polynomial $g \in KE(G'')$.  In Section \ref{sec:cycles} we compute
the weight distribution of the edge toric code $\mathcal{C}_{C_s}$, where 
$C_s$ is an $s$-cycle. Finally, in Section \ref{sec:unicylic}, we combine
these results to describe the weight distribution of 
$\mathcal{C}_G$ for any unicylic graph $G$.

%%%%%%%%%%%%%%%%%%%%%%%%%%%%%%%%%%%%%

\section{Background}\label{sec.background}
We recall the background on edge toric codes
that we need for the later sections. While edge toric codes, as  introduced by Jaramillo-Velez \cite{Jar25}, 
can be defined  using any hypergraph, we only present 
the definition in the case of graphs. 
Throughout this paper, $K =\mathbb{F}_q$ denotes a finite
field  and $K^* = (\mathbb{F}_q)^*$ denotes
the multiplicative group of units of $K$. We also
make the assumption that $q > 2$ throughout this paper to avoid some
degenerate cases; for instance, see Example \ref{ex:charex}.

Let $G = (V,E)$ denote a finite simple graph
on the vertex set $V = \{t_1,\ldots,t_s\}$ and edge
set $E = \{e_1,\ldots,e_r\}$.  Each edge $e_i$ has the form $e_i = \{t_{i_1},t_{i_2}\}\subseteq V$ for
some $t_{i_1} \neq t_{i_2}$. 
We say that $t_{i_1}$ and $t_{i_2}$ are \textit{adjacent}. A graph $G$ is said to be a \textit{tree} if it is connected and contains no cycles, while a \textit{forest} is a disjoint union of trees. The \textit{cycle} of length $s$ is the graph $C_s$ whose vertex and egde sets are $V(C_s)=\{t_1,\dots,t_s\}$ and $E(C_s)=\{\{t_i,t_{i+1}\}\,:\,1\leq i\leq s-1\}\cup\{\{t_1,t_s\}\}$, respectively. A cycle is said to be \textit{induced} if there are no edges joining two non-consecutive vertices of the cycle. The main goal of this paper is to study \textit{unicyclic} graphs, which are graphs containing exactly one induced cycle.

Let $S = K[t_1,\ldots,t_s]$ 
denote
the polynomial ring over the field $K$.  Note that
we are abusing notation by letting $t_i$ represent
both a vertex of the graph and a variable in the polynomial ring. Using $G$, we define
the following $K$-vector space of polynomials:
$$KE(G) = {\rm span}_K\{t_it_j ~:~ \{t_i,t_j\} \in E\}.$$ 
Now consider the affine torus
$T= (K^*)^s$.   This torus has $|T| = (q-1)^s$ 
distinct points, and we put an order
on the elements of $T$, say 
$T = \{P_1,\ldots,P_{|T|}\}$.
We define the {\it evaluation map} to be
$${\rm ev}: KE(G) \rightarrow K^{|T|}  
~~\mbox{given by}~~f \mapsto (f(P_1),\ldots,f(P_{|T|})).$$
In other words, we evaluate $f$ at all points in $T$.

We can now recall the following definition due to Jaramillo-Velez
\cite[Definition A]{Jar25}.

\begin{defi}\label{def:EdgeCode}
The {\it edge (toric) code} $\mathcal C_G$ of the graph $G$ 
is the image of the evaluation map, that is, $\mathcal C_G := {\rm ev}(KE(G)) \subseteq K^{|T|}$. 
\end{defi}

A {\it linear code} of {\it length} 
$n$ and {\it dimension} $k$ is a subspace $\mathcal C \subseteq K^n$ 
where $\dim_K \mathcal C = k$.  Elements of $\mathcal C$ are called 
{\it codewords.}  Given a codeword
$c \in \mathcal C$, the {\it Hamming weight} of $c$ is the number
of non-zero coordinates in $c$.    For $1 \leq  i \leq n$,
let $A_i(\mathcal C)$ be the number of codewords in $\mathcal C$ with Hamming weight
$i$.  The vector $(A_1(\mathcal C),\ldots,A_n(\mathcal C))$ is the 
{\it weight distribution} of $\mathcal C$.  The {\it minimum 
distance} of $\mathcal C$ is the smallest $i$ such
that $A_i(\mathcal C) \neq 0$.  We denote the minimum 
distance by $\delta(\mathcal C)$. A linear code of length $n$, dimension $k$ and minimum distance $\delta$ is usually called an $[n,k,\delta]$-code.
Albeit all the previous definitions hold for any field $K$, the standard choice for coding theory is $K=\mathbb{F}_q$, where $\mathbb{F}_q$ is the finite field with $q$ elements.

As we mentioned in the introduction, for an edge code $\mathcal C_G$, Jaramillo-Velez 
related its length, dimension and minimum distance to 
properties of $G$.  To state his result,
we let $V_T(f)$ denote the affine
variety of all zeroes of $f$ in $T$, that is,
$$V_T(f) = \{P \in T ~:~ f(P) = 0\} \subseteq T\,.$$
With this notation, we have:

\begin{lem}\label{lem:EdgeCodesParameters}
Let $\mathcal C_G$ be the edge code of a graph $G = (V,E)$ on $s$ vertices, and let $T=(K^*)^s$. Then
\begin{enumerate}
    \item the length of $\mathcal C_G$ is $|T| = (q-1)^s$;
    \item the dimension of $\mathcal C_G$ is $|E|$; and
    \item the Hamming weight of the codeword corresponding to a polynomial $f\in KE(G)$ is $(q-1)^s-|V_{T}(f)|$, which implies that the minimum distance of $\mathcal C_G$ is 
    $$\delta(\mathcal C_G) = (q-1)^s-\max_{f\in KE(G)} |V_T(f)|.$$
    \item $A_{|T|}(\mathcal{C}_G)
    \geq |E|.$
\end{enumerate}
\end{lem}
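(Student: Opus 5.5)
Parts (1) and (3) follow straight from the definitions, so the real work is in (2) and (4).

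For (1), the length of $\mathcal C_G$ is the number of coordinates of ${\rm ev}(f)$, which is $|T| = |K^*|^s = (q-1)^s$.

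For (3), the $i$-th coordinate of ${\rm ev}(f)$ is zero exactly when $P_i \in V_T(f)$. So the Hamming weight of ${\rm ev}(f)$ is $(q-1)^s - |V_T(f)|$. The minimum distance is the smallest weight of a nonzero codeword. Once (2) shows that ${\rm ev}$ is injective, nonzero codewords correspond to nonzero $f$, and minimizing the weight is the same as maximizing $|V_T(f)|$ over nonzero $f \in KE(G)$. The maximum in the displayed formula should therefore be read over $f \neq 0$.

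For (2), the edge monomials $t_it_j$ are distinct, so they are linearly independent in $S$ and $\dim_K KE(G) = |E|$. It remains to show that ${\rm ev}$ is injective on $KE(G)$, and this is the main step. I would use the standard fact that a polynomial vanishing on all of $(K^*)^s$ lies in the ideal $(t_1^{q-1}-1,\ldots,t_s^{q-1}-1)$. Equivalently, a nonzero polynomial whose degree in each variable is at most $q-2$ does not vanish identically on $T$. A nonzero $f \in KE(G)$ is squarefree, so each variable appears with degree at most $1$. Since $q > 2$ we have $1 \le q-2$, hence $f$ does not vanish on all of $T$, ${\rm ev}(f) \neq 0$, and $\dim \mathcal C_G = |E|$.

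To justify the standard fact, I would argue by induction on $s$. Write $f = \sum_{k} f_k(t_1,\ldots,t_{s-1})\,t_s^k$ with $k \le q-2$. For each fixed point of $(K^*)^{s-1}$, this is a univariate polynomial in $t_s$ of degree at most $q-2$ with $q-1$ roots in $K^*$, so it is the zero polynomial. Hence every $f_k$ vanishes on $(K^*)^{s-1}$, and by induction every $f_k = 0$. The hypothesis $q>2$ is used exactly here; for $q=2$, the polynomials $t_1t_2$ and $t_2t_3$ both evaluate to $1$ at the single point $(1,1,1)$ of $T$.

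For (4), take $f = \lambda t_it_j$ with $\{t_i,t_j\} \in E$ and $\lambda \in K^*$. This $f$ is nonzero at every point of $T$, since all coordinates are units, so its codeword has full weight $|T|$. By injectivity, distinct edges give distinct codewords. This already yields at least $|E|$ codewords of weight $|T|$, and in fact $|E|(q-1)$ of them once the choices of $\lambda$ are counted, so $A_{|T|}(\mathcal C_G) \ge |E|$.
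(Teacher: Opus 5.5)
Your proof is correct, and for (4) it is the paper's argument: an edge monomial has no zeros on $T$, so its codeword has full weight.

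Where you differ is (1)--(3). The paper does not prove these; it simply cites \cite[Proposition 3.1]{Jar25}. You instead give a self-contained argument. Its key step is the injectivity of ${\rm ev}$ on $KE(G)$, which you obtain from the fact that a nonzero polynomial of degree at most $q-2$ in each variable cannot vanish on all of $(K^*)^s$.

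The paper's route buys brevity. Yours makes explicit several points the paper leaves tacit:
\begin{itemize}
\item exactly where the standing hypothesis $q>2$ is used (compare Example~\ref{ex:charex});
\item why distinct edges give \emph{distinct} codewords in (4), which matters because $A_{|T|}(\mathcal C_G)$ counts codewords rather than polynomials; this also gives the sharper bound $A_{|T|}(\mathcal C_G)\geq |E|(q-1)$, which is an equality for trees by Theorem~\ref{thm:WeightDistTrees};
\item that the maximum in (3) must be taken over $f\neq 0$, since $f=0$ gives $|V_T(0)|=|T|$ and the formula would then return $\delta(\mathcal C_G)=0$.
\end{itemize}
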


\begin{proof}
    Statements $(1)-(3)$ can be found
    in \cite[Proposition 3.1]{Jar25}.
    For (4), notice that for any
    edge $\{t_i,t_j\} \in E$,
    we have $t_it_j \in KE(G)$ and
    $V_T(t_it_j) = \emptyset$, so the codeword associated to $t_it_j$ only has non-zero coordinates.
    Hence each $e \in E$ gives 
    a codeword of Hamming weight $|T|$.
\end{proof}

The formula for $\delta(\mathcal C_G)$
tells us that determining the minimum distance of the code $\mathcal{C}_G$ 
is equivalent to finding the largest number of roots in the affine torus a 
polynomial in $KE(G)$ can have. Furthermore, computing the number of roots of 
all polynomials in $KE(G)$ allows to determine the code's weight distribution. 
Our goal is to find these distributions
for all edge codes associated to unicyclic graphs.

When the graphs under consideration are trees, Jaramillo-Velez determined 
the number of roots of all polynomials associated to them.
We summarize these results since we shall also need them for the
unicyclic case.  In fact, we will give a new
proof for Proposition \ref{prop:DeliozeroesTrees} using
the techniques of the next section.

\begin{prop}[\hspace{-0.01cm}{\cite[Proposition 3.14]{Jar25}}] \label{prop:DeliozeroesTrees}
    Let $G$ be a tree with $s \geq 2$ vertices. For each $f\in KE(G)$ with $r\in\{1,2,\dots,s-1\}$ non-zero terms, \[|V_T(f)|=\sum_{i=1}^{r-1}(-1)^{i+1}(q-1)^{s-i}\,.\]

\end{prop}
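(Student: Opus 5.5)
Let $f=\sum_{e\in F} a_e\,t_it_j$, where $F\subseteq E$ is the set of the $r$ edges appearing in $f$ and every $a_e\neq 0$. Let $H$ be the subgraph of $G$ formed by the edges of $F$, together with all vertices of $G$; it is a forest. Vertices not covered by $F$ are free coordinates, so they contribute a factor $(q-1)$ each. I would therefore reduce to counting zeroes of $f$ on the torus of the covered vertices. I will argue by induction on $r$, removing a leaf each time. I claim the count depends only on $r$ and $s$: writing $N(r,s)=|V_T(f)|$, the target formula says $N(r,s)=(q-1)^{s-1}-N(r-1,s-1)$, with $N(1,s)=0$. This recursion is easy to check directly from the alternating sum, since $\sum_{i=1}^{r-1}(-1)^{i+1}(q-1)^{s-i}=(q-1)^{s-1}-\sum_{i=1}^{r-2}(-1)^{i+1}(q-1)^{s-1-i}$.

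\emph{Base case.} For $r=1$, $f=a\,t_it_j$ has no zeroes on $T$, which matches the empty sum.

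\emph{Inductive step.} For $r\ge 2$, the forest $H$ has at least one component with an edge, so it has a leaf $t$ lying in exactly one edge $\{t,u\}$ of $F$. Write $f=a\,tu+g$, where $g$ does not involve $t$, and $g$ has $r-1\ge1$ nonzero terms supported on the forest $F\setminus\{\{t,u\}\}$. Fix the values of all other variables at a point of $(K^*)^{s-1}$. The equation $a\,tu=-g$ in the variable $t\in K^*$ then behaves as follows. If $g\neq 0$ at that point, it has exactly one solution, namely $t=-g/(au)$, which is nonzero. If $g=0$, it has no solution, since $a\,tu\neq0$. Hence
\[|V_T(f)|=(q-1)^{s-1}-|V_{T'}(g)|,\]
where $T'=(K^*)^{s-1}$ and $g$ is viewed in the variables other than $t$.

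Now $g$ lies in $KE$ of a forest on $s-1$ vertices, namely the edges of $F$ other than $\{t,u\}$, with $r-1$ terms. This forest is not necessarily a tree, so the induction hypothesis should be stated for forests, or simply for arbitrary sets of $r$ edges forming a forest on $s$ vertices. The leaf-removal argument never used connectivity, so the strengthened statement goes through unchanged. Applying it gives $|V_{T'}(g)|=N(r-1,s-1)$, and the recursion closes the induction.

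The main point needing care is exactly this: the statement must be strengthened from trees to forests so that the induction applies to $g$. The other point is to verify explicitly that the solution $t=-g/(au)$ lies in $K^*$ precisely when $g\neq0$. There is no dependence on the coefficients $a_e$, which is consistent with the statement.
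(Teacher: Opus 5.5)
Your proof is correct, and its key step is exactly the $\beta\neq 0$ case of Lemma~\ref{lem:AddingLeaves}. The induction, however, is organized differently from the paper's.

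The paper obtains the statement as Corollary~\ref{cor:treeFormula} of Theorem~\ref{thm:attachingtrees}. It views $G$ as a single vertex with a tree of $p=s-1$ edges attached, inducts on $p$, and always removes a leaf of the \emph{graph} $G$. That leaf's edge may or may not appear in $f$, so the paper must:
\begin{itemize}
\item handle both branches of Lemma~\ref{lem:AddingLeaves}, multiplying by $q-1$ when $\beta=0$;
\item carry the general term $(-1)^r(q-1)^{p-r}|V(g)|$ through the induction;
\item close with the observation that $g=0$ on a one-vertex graph has $q-1$ zeroes.
\end{itemize}

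You instead remove a leaf of the \emph{support} of $f$. Only the $\beta\neq 0$ branch ever occurs, and the induction runs directly on $r$ through the recursion $N(r,s)=(q-1)^{s-1}-N(r-1,s-1)$. The price is the passage from trees to forests, which you correctly identify and handle: a support leaf need not be a leaf of $G$, and the support of $f$ is in general a forest anyway.

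Your route is shorter and self-contained. It also applies verbatim to any $f$ whose support is a forest in an arbitrary graph. The paper instead gets this by embedding the support in a spanning tree, in Section~\ref{sec:cycles} and Theorem~\ref{thm:zeroesEvenUnicyclic}(1). The paper's route buys the more general Theorem~\ref{thm:attachingtrees}, valid for any base graph with trees attached, which it reuses with a cycle as the base graph in Theorem~\ref{thm:zeroesEvenUnicyclic}(2).

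Your opening remark about reducing to the covered vertices is never used and can be dropped.
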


From Lemma \ref{lem:EdgeCodesParameters} and  Proposition
\ref{prop:DeliozeroesTrees}, we can obtain the weight distribution of the edge code associated to a tree. 

\begin{thm}\label{thm:WeightDistTrees}
    Let $G$ be a tree with $s\geq 2$ vertices. The weight distribution of $\mathcal{C}_G$ satisfies
    \begin{eqnarray*}
        A_{\sum_{i=0}^{r-1}(-1)^{i}(q-1)^{s-i}}({\mathcal C}_{G})=\binom{s-1}{r}(q-1)^r \text{ for every }r\in\{1,2,\dots,s-1\},
    \end{eqnarray*} and $A_j(\mathcal C_{G})=0$ for the remaining values of $j$.

    Consequently, the minimum distance is given by $\delta(\mathcal{C}_G) = (q-1)^{s-1}(q-2)$, and it corresponds to $r=2$, that is, it is achieved with all polynomials
    with two non-zero terms.
\end{thm}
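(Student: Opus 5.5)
The plan is to combine Lemma \ref{lem:EdgeCodesParameters} with Proposition \ref{prop:DeliozeroesTrees} and then count codewords by their number of terms. A tree on $s$ vertices has exactly $s-1$ edges, so by Lemma \ref{lem:EdgeCodesParameters}(2) the evaluation map is injective on $KE(G)$. Hence codewords are in bijection with polynomials $f=\sum_{e\in E}a_e\,m_e$, where $m_e=t_it_j$ is the edge monomial. A nonzero $f$ is determined by choosing a support of $r$ edges, $1\le r\le s-1$, and nonzero coefficients on that support. This gives $\binom{s-1}{r}(q-1)^r$ polynomials with exactly $r$ terms.

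Next I compute the weight attached to each $r$. By Lemma \ref{lem:EdgeCodesParameters}(3) and Proposition \ref{prop:DeliozeroesTrees}, the weight is
\[
(q-1)^s-\sum_{i=1}^{r-1}(-1)^{i+1}(q-1)^{s-i}=\sum_{i=0}^{r-1}(-1)^{i}(q-1)^{s-i}.
\]
This is the value $w_r$ in the statement, and it depends only on $r$. Summing over the polynomials with $r$ terms would give the claimed $A_{w_r}$, but only if the $w_r$ are pairwise distinct; otherwise the counts for coinciding values would add together.

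Distinctness is the one point that needs care. Write $w_r=(q-1)^{s-r+1}\sum_{j=0}^{r-1}(-1)^{r-1-j}(q-1)^j$, and set $x=q-1\ge 2$ (this uses $q>2$). The alternating sum is at least $1$, so $w_r\ge x^{s-r+1}$. Consecutive values satisfy $w_{r+1}-w_r=(-1)^r x^{s-r}$, so the sequence oscillates with strictly shrinking steps $x^{s-1},x^{s-2},\dots$. Because $x\ge2$, each step is larger than the sum of all later steps: $x^{k}>x^{k-1}+\dots+1$. This places every even-indexed value strictly below every odd-indexed value. Within the even indices the values increase ($w_2<w_4<\cdots$), and within the odd indices they decrease ($w_1>w_3>\cdots$). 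So all the $w_r$ are distinct.

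The same chain of inequalities shows the minimum is $w_2=x^s-x^{s-1}=(q-1)^{s-1}(q-2)$. Every weight not of the form $w_r$ receives no codeword, so $A_j=0$ for those $j$. Finally, when $s=2$ only $r=1$ occurs, the formula $\delta=(q-1)^{s-1}(q-2)$ fails, and that case should be noted separately; the minimum-distance claim tacitly assumes $s\ge3$.
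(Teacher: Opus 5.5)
Your proposal is correct and follows the same route as the paper: you count the $\binom{s-1}{r}(q-1)^r$ polynomials with exactly $r$ terms and read off their weights from Lemma~\ref{lem:EdgeCodesParameters} and Proposition~\ref{prop:DeliozeroesTrees}. The paper's one-line proof omits three details that you supply: the check that the weights $w_r$ are pairwise distinct (so the counts do not merge), the identification of $w_2$ as the minimum weight, and the observation that the minimum-distance claim fails for $s=2$, where $\delta=(q-1)^2$.
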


\begin{proof}
    Notice that a tree on $s$ vertices has $s-1$ edges, and there are ${s-1\choose r}(q-1)^r$ polynomials in $KE(G)$ with exactly $r$ non-zero terms.
\end{proof}

Jaramillo-Velez also proved the following result for graphs 
containing 4-cycles as  subgraphs.

\begin{thm}[\hspace{-0.01cm}{\cite[Theorem 3.7]{Jar25}}]
    Let $q>2$ and let $G$ be a finite simple graph on $s\geq 4$ vertices containing a 4-cycle 
    (not necessarily induced). Then the minimum distance of the edge code $\mathcal C_G$ is given by $$\delta(\mathcal C_G)=(q-2)^2(q-1)^{s-2}\,.$$
\end{thm}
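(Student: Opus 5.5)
The plan is to prove the two inequalities separately: first exhibit one polynomial in $KE(G)$ with many zeroes, then show that no nonzero polynomial has more. By Lemma \ref{lem:EdgeCodesParameters}(3), the claim is equivalent to
$$\max_{0\neq f\in KE(G)}|V_T(f)| = (q-1)^s-(q-2)^2(q-1)^{s-2} = (q-1)^{s-2}(2q-3).$$

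\emph{Step 1: a polynomial attaining the value.} Relabel so that the $4$-cycle has edges $\{t_1,t_2\},\{t_2,t_3\},\{t_3,t_4\},\{t_4,t_1\}$. Take
$$f=t_1t_2-t_2t_3+t_3t_4-t_1t_4=(t_1-t_3)(t_2-t_4)\in KE(G).$$
A point of $T$ is a zero of $f$ exactly when $t_1=t_3$ or $t_2=t_4$. Counting by inclusion--exclusion on the first four coordinates gives
$$(q-1)^3+(q-1)^3-(q-1)^2=(q-1)^2(2q-3)$$
zeroes. The remaining $s-4$ coordinates are free, so $|V_T(f)|=(q-1)^{s-2}(2q-3)$. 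This shows $\delta(\mathcal C_G)\le (q-2)^2(q-1)^{s-2}$.

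\emph{Step 2: the matching upper bound.} Let $0\neq f\in KE(G)$ and choose a variable, say $t_1$, that occurs in $f$. Write $f=t_1L+g$. Here $L$ is a nonzero linear form in the neighbours of $t_1$, and $g$ does not involve $t_1$. Fix $P'\in (K^*)^{s-1}$, the values of $t_2,\ldots,t_s$.
\begin{itemize}
\item If $L(P')\neq 0$, then at most one value $t_1\in K^*$ gives a zero.
\item If $L(P')=0$, then at most $q-1$ values of $t_1$ give a zero.
\end{itemize}
Let $Z$ be the number of zeroes of $L$ in $(K^*)^{s-1}$. Then
$$|V_T(f)|\le (q-1)Z+\bigl((q-1)^{s-1}-Z\bigr)=(q-1)^{s-1}+(q-2)Z.$$
\begin{itemize}
\item If $L$ is a single monomial, then $Z=0$, so $|V_T(f)|\le (q-1)^{s-1}$, which is below the target.
\item If $L$ has at least two terms, solve for one of its variables in terms of the others. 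This gives $Z\le (q-1)^{s-2}$, and hence
$$|V_T(f)|\le (q-1)^{s-1}+(q-2)(q-1)^{s-2}=(q-1)^{s-2}(2q-3).$$
\end{itemize}

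Combining Steps 1 and 2 gives the formula for $\delta(\mathcal C_G)$. The only delicate points are the count in Step 1 and the bound $Z\le (q-1)^{s-2}$. The estimate in Step 2 deliberately ignores $g$, and it is already sharp enough. The hypothesis $q>2$ is what makes the bound meaningful, since for $q=2$ the right-hand side $(q-2)^2(q-1)^{s-2}$ vanishes.
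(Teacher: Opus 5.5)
Your proof is correct. The paper gives no proof of this statement; it quotes it from \cite{Jar25} as background. So your argument is a self-contained substitute rather than a reconstruction, though it fits the paper's own toolkit closely.

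Step~1 is exactly the first case of Proposition~\ref{Prop:zeroesC4}. The coefficients $(1,-1,1,-1)$ satisfy $\alpha_{1,2}\alpha_{3,4}-\alpha_{2,3}\alpha_{1,4}=0$, and $(q-1)^3+(q-1)^2(q-2)=(q-1)^2(2q-3)$ matches your inclusion--exclusion count. The factorization $(t_1-t_3)(t_2-t_4)$ just makes that case transparent.

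Step~2 is the inequality version of the splitting $f=t_1L+h$ used in Propositions~\ref{prop:RecursiveFormulaCycles} and~\ref{Prop:zeroesC4}. There the paper counts exactly: its Case (1), $L=h=0$, contributes $q-1$ values of $t_1$, and its Case (2), $L\neq 0\neq h$, contributes one. You discard the conditions on $h$ and keep only these fibre sizes.

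The trade-off is as follows. The paper's exact bookkeeping yields complete weight distributions, but only for trees, cycles and unicyclic graphs, and it requires tracking coefficient conditions. Your estimate never uses the $4$-cycle. It shows $|V_T(f)|\le (q-1)^{s-2}(2q-3)$ for every nonzero $f\in KE(G)$ and every graph $G$ on $s$ vertices, hence $\delta(\mathcal C_G)\ge (q-2)^2(q-1)^{s-2}$ universally. The $4$-cycle is needed only for sharpness.

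As a by-product, $(q-1)^{s-2}(2q-3)<(q-1)^s$ when $q>2$, so no nonzero $f$ vanishes on all of $T$. This justifies taking the maximum in Lemma~\ref{lem:EdgeCodesParameters}(3) over nonzero $f$ only, as you did.
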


In the following sections, we provide an extension of the previous 
result to more families of graphs. In particular, we work towards  studying
the family of unicyclic graphs.

%%%%%%%%%%%%%%%%%%%%%%%%%%%%%%%%%%%%

\section{Edge codes of graphs with a leaf}\label{sec:leaf}

We now examine the edge codes of graphs with a leaf.
Recall that we say that a vertex 
$t \in V$ of a graph $G$ is a 
{\it leaf} if there is exactly one edge $e \in E$ with 
$t \in e$. For any vertex $t \in V$, the graph 
$G' = G \setminus\{t\}$ is the graph formed by removing $t$ 
and all the  edges of $G$ that contain $t$.
The main result of this section is to relate
the edge code $\mathcal{C}_G$ with that
of $\mathcal{C}_{G'}$ when $t$ is a leaf.
As a consequence,  by repeatedly removing leaves, 
we can reduce the study of edge codes of 
graphs to smaller graphs.

We begin by relating the number of zeroes of a polynomial
$f \in KE(G)$ to those of a polynomial in $KE(G')$.
Some of the ideas of Lemma \ref{lem:AddingLeaves}
are implicit in the proof of 
\cite[Proposition 3.14]{Jar25} which only dealt
with trees.  However, the result applies more generally 
to any graph with a leaf.

\begin{lem}\label{lem:AddingLeaves}
    Let $G$ be a graph on vertices $\set{t_1, \ldots, t_s, t_{s+1}}$, 
    where $t_{s+1}$ is a leaf that is adjacent to $t_s$. 
    Set $G' = G \sm \set{t_{s+1}}$. Let $f \in KE(G)$, and 
    write $$f(t_1, \ldots, t_s, t_{s+1}) = 
    f'(t_1, \ldots, t_s) + \beta t_s t_{s+1},$$ with 
    $f' \in KE(G')$ and $\beta \in \FF_q.$ 
    Let $T = \paren{\FF_q^*}^{s+1}$ and $T' = \paren{\FF_q^*}^{s}$. Then
    $$\abs{V_T(f)} = 
    \begin{cases}
    (q-1) \abs{V_{T'}(f')} & \text{ if } \beta = 0, \ and \\
    (q-1)^s - \abs{V_{T'}(f')} & \text{ if } \beta \neq 0.
\end{cases}$$
\end{lem}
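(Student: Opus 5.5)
The plan is to count the zeroes of $f$ on $T = T' \times \FF_q^*$ by fibering over the first $s$ coordinates. For a point $P' = (a_1,\ldots,a_s) \in T'$, we count the values $a_{s+1} \in \FF_q^*$ with $f(P',a_{s+1}) = 0$, that is, with $f'(P') + \beta a_s a_{s+1} = 0$. Summing these fiber sizes over all $P' \in T'$ gives $\abs{V_T(f)}$. The key structural fact is that $f'$ does not involve $t_{s+1}$, since $t_{s+1}$ is a leaf whose only edge is $\{t_s,t_{s+1}\}$. Hence $f$ is affine-linear in $t_{s+1}$, with linear coefficient $\beta t_s$.

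\emph{Case $\beta = 0$.} Here $f = f'$ and the condition does not depend on $a_{s+1}$. The fiber over $P'$ is all of $\FF_q^*$ when $f'(P') = 0$, and empty otherwise. This gives $\abs{V_T(f)} = (q-1)\abs{V_{T'}(f')}$.

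\emph{Case $\beta \neq 0$.} Since $a_s \in \FF_q^*$, the coefficient $\beta a_s$ is nonzero, so the equation has the unique solution $a_{s+1} = -f'(P')/(\beta a_s)$ in $\FF_q$. This solution lies in $\FF_q^*$ exactly when $f'(P') \neq 0$. So the fiber over $P'$ has size $1$ if $f'(P') \neq 0$ and size $0$ if $f'(P') = 0$. Summing over $P'$ gives $\abs{V_T(f)} = \abs{T'} - \abs{V_{T'}(f')} = (q-1)^s - \abs{V_{T'}(f')}$.

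The only point requiring care, and the nearest thing to an obstacle, is the case $\beta \neq 0$. There we must use that $a_s$ is a unit, because we work on the torus rather than on all of affine space. This is what guarantees a unique solution, and the torus restriction on $a_{s+1}$ is what removes the solution $a_{s+1} = 0$ exactly at the points of $V_{T'}(f')$. The remaining steps are routine bookkeeping.
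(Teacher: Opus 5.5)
Your proposal is correct and is essentially the paper's proof. The case $\beta=0$ is identical. For $\beta\neq 0$, the paper shows that the projection $V_T(f)\to T'\setminus V_{T'}(f')$ is a bijection, which is the same as your observation that each fiber has exactly one point when $f'(P')\neq 0$ and none when $f'(P')=0$.
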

\begin{proof}
    If $\beta = 0$, then $f = f'$, which does not involve $t_{s+1}$. So, any choice of $a_{s+1} \in \FF_q^*$ and $(a_1, \ldots, a_s) \in V_{T'}(f')$ yields an element of $V_T(f)$. Thus, $$\abs{V_T(f)} = \abs{\FF_q^*} \cdot \abs{V_{T'}(f')} = (q-1) \abs{V_{T'}(f')}.$$
    
Now, assume $\beta \neq 0.$ Define the map 
$\phi: V_T(f) \to T' \sm V_{T'}(f')$ by 
$$\quad \phi(a_1, \ldots, a_s, a_{s+1}) = (a_1, \ldots, a_s).$$ 
We shall show that $\phi$ is a well-defined bijection.
To see that this map is well-defined,
suppose $(a_1, \ldots, a_{s+1}) \in  V_T(f)$. Then, 
$$f(a_1, \ldots, a_{s+1}) = 
f'(a_1, \ldots, a_s) + \beta a_s a_{s+1} = 0 \implies 
f'(a_1, \ldots, a_s) = - \beta a_s a_{s+1}.$$ 
Since $\beta, a_s, a_{s+1} \in \FF_q^*$, 
the right-hand side is non-zero.
Therefore, the left-hand side is also non-zero. In other words, 
$f'(a_1, \ldots, a_s) \neq 0$,
and thus $\phi(a_1,\ldots,a_{s+1}) \in T'\setminus V_{T'}(f')$.

To show the map is injective, 
suppose $\phi(a_1, \ldots, a_{s+1}) = 
\phi(a_1', \ldots, a_{s+1}')$. 
This directly gives $a_i = a_i'$ for $i=1,\ldots,s$.
Furthermore, $$a_{s+1} = -\dfrac{f'\paren{\phi(a_1, \ldots, a_{s+1})}}{\beta a_s} = -\dfrac{f'\paren{\phi(a_1', \ldots, a_{s+1}')}}{\beta a_s'} = a_{s+1}'.$$
Finally, for the surjectivity of $\phi$, given $(a_1, \ldots, a_s) \in T' \sm V_{T'}(f')$, let $$a_{s+1} = -\dfrac{f'\paren{a_1, \ldots, a_{s}}}{\beta a_s} \neq 0.$$ By construction, $\paren{a_1, \ldots, a_{s+1}} \in V_T(f)$, and $\phi\paren{a_1, \ldots, a_{s+1}} = (a_1, \ldots, a_s).$
\end{proof}

The above result can be used to derive some consequences
for the corresponding weight distributions.

\begin{thm}\label{thm:relateWeightDistributions}
     Let $G$ be a graph on vertices $\set{t_1, \ldots, t_s, t_{s+1}}$, 
    where $t_{s+1}$ is a leaf that is adjacent to $t_s$. 
    Set $G' = G \sm \set{t_{s+1}}$, and let $T = \paren{\FF_q^*}^{s+1}$ and $T' = \paren{\FF_q^*}^{s}$.

    Let $A_{\mathcal{C}_G} = (A_1(\mathcal{C}_G),\ldots,
    A_{|T|}(\mathcal{C}_G))$ be the weight distribution
    vector of $\mathcal{C}_G$, and similarly, let 
    $A_{\mathcal{C}_{G'}} = (A_1(\mathcal{C}_{G'}),\ldots,
    A_{|T'|}(\mathcal{C}_{G'}))$ be the weight distribution
    vector of $\mathcal{C}_{G'}$.
    Then:
    \begin{enumerate}
        \item If $A_i(\mathcal{C}_{G'}) \neq 0$, then 
        $A_i(\mathcal{C}_{G'}) \leq A_{i(q-1)}(\mathcal{C}_{G})$.
        \item If $A_i(\mathcal{C}_{G}) \neq 0$ and $i \neq |T|$, then
        either $A_{\frac{i}{(q-1)}}(\mathcal{C}_{G'}) \neq 0$ or
        $A_{|T|-i}(\mathcal{C}_{G'}) \neq 0$.
    \end{enumerate}
\end{thm}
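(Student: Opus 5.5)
Both parts follow from Lemma \ref{lem:AddingLeaves} together with the weight formula in Lemma \ref{lem:EdgeCodesParameters}(3). The idea is to track how Hamming weights change under the decomposition $f = f' + \beta t_st_{s+1}$.

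For (1), we use the map $KE(G') \to KE(G)$, $f' \mapsto f'$ (that is, $\beta=0$). This map is injective on polynomials. Since the evaluation maps are injective by Lemma \ref{lem:EdgeCodesParameters}(2), it is also injective on codewords. If $f'$ has weight $i$, then $|V_{T'}(f')| = (q-1)^s - i$. By the case $\beta=0$ of Lemma \ref{lem:AddingLeaves}, $|V_T(f')| = (q-1)^{s+1} - (q-1)i$, so the weight of $f'$ viewed in $\mathcal{C}_G$ is
$$(q-1)^{s+1} - |V_T(f')| = (q-1)i.$$
Hence codewords of weight $i$ in $\mathcal{C}_{G'}$ inject into codewords of weight $i(q-1)$ in $\mathcal{C}_G$, which gives $A_i(\mathcal{C}_{G'}) \le A_{i(q-1)}(\mathcal{C}_G)$. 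The hypothesis $A_i(\mathcal{C}_{G'})\neq 0$ is not really needed; it only ensures the statement is not vacuous.

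For (2), take $f\in KE(G)$ of weight $i$, write $f=f'+\beta t_st_{s+1}$, and split into cases.
\begin{itemize}
\item If $\beta = 0$, then by the computation in (1) the weight of $f$ is $(q-1)$ times the weight $w$ of $f'$ in $\mathcal{C}_{G'}$. So $i/(q-1) = w$ is an integer and $A_{i/(q-1)}(\mathcal{C}_{G'})\ge 1$. Here $f'\neq 0$ because $f\neq 0$; the case $i=0$ does not arise since weights are indexed from $1$.
\item If $\beta\neq 0$, the lemma gives $|V_T(f)| = (q-1)^s - |V_{T'}(f')|$. Then
$$i = (q-1)^{s+1} - (q-1)^s + |V_{T'}(f')| .$$
The weight of $f'$ in $\mathcal{C}_{G'}$ is $w' = (q-1)^s - |V_{T'}(f')|$, so $i = (q-1)^{s+1} - w' = |T| - w'$, that is, $w' = |T|-i$. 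The hypothesis $i\ne |T|$ gives $w'\neq 0$, so $f'\neq 0$. Therefore $A_{|T|-i}(\mathcal{C}_{G'})\ge 1$.
\end{itemize}

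The main subtlety is bookkeeping: $|T|$ in (2) means $(q-1)^{s+1}$, and one must check that $|T|-i$ is a legitimate index for $\mathcal{C}_{G'}$, i.e.\ $1\le |T|-i\le |T'|$. This holds automatically because $|T|-i = w'$ is an actual nonzero weight of $\mathcal{C}_{G'}$. The exclusion $i\neq |T|$ exists precisely to rule out $f'=0$, i.e.\ $f=\beta t_st_{s+1}$.
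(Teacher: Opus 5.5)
Your proof is correct and follows the paper's argument. Part (1) uses the same inclusion $KE(G')\hookrightarrow KE(G)$ with $\beta=0$ together with the first case of Lemma~\ref{lem:AddingLeaves}, and part (2) splits on $\beta=0$ versus $\beta\neq 0$ exactly as the paper does. The only difference is cosmetic: you get $|T|-i\neq 0$ directly from the hypothesis $i\neq |T|$, whereas the paper first shows that $f'$ has a non-zero term and hence $|V_{T'}(f')|\neq |T'|$; you also spell out the passage from counting polynomials to counting codewords, which the paper leaves implicit.
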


\begin{proof}
    $(1)$ Suppose that $A_i(\mathcal{C}_{G'}) \neq 0$.  
    Let $B_i'$ be the set of all $f \in KE(G')$ with 
    $|T'|-|V_{T'}(f)| = i$. By definition, we have $|B_i'| = A_i(\mathcal{C}_{G'})$.
 Consider the natural injection $\phi:KE(G') \rightarrow KE(G)$
 given by $\phi(f) = f + 0t_st_{s+1} \in KE(G)$.  Let 
 $B_i = \phi(B'_i)$.   For any $\phi(f) \in B_i$, we have
 $|V_{T}(\phi(f))|= (q-1)|V_{T'}(f)|$ by Lemma \ref{lem:AddingLeaves}.  But then 
 $$|T|-|V_{T}(\phi(f))| = (q-1)|T'| - (q-1)|V_{T'}(f)|
 = (q-1)i.$$
 So, the codeword associated to every $\phi(f) \in B_i$ has $(q-1)i$ non-zero
 coordinates.   Hence
 $$A_{i(q-1)}(\mathcal{C}_{G}) \geq |B_i|
 = |B'_i| = A_i(\mathcal{C}_{G'}).$$

 $(2)$  Suppose that  $A_i(\mathcal{C}_{G}) \neq 0$ with $i \neq |T|$.  
 Thus there exists an $f \in KE(G)$ such that $|T|-|V_T(f)| = i$.  Because $i \neq |T|$,
 this implies that $f$ has more than one non-zero term. Indeed,
 if $f$ had a single non-zero term, then $V_T(f) = \emptyset$, and thus
 $i = |T|-|V_T(f)| = |T|.$
 Write $f$ as $f = f'(t_1,\ldots,t_s) + \beta t_st_{s+1}$.  
 If $\beta = 0$, then by Lemma \ref{lem:AddingLeaves} 
 $$i=|T| - |V_{T}(f)| = |T| -(q-1)|V_{T'}(f')| = 
 (q-1)\left(|T'| - |V_{T'}(f')|\right).$$
 Thus $\frac{i}{(q-1)} = |T'|-|V_{T'}(f')|$, and so 
 $A_{\frac{i}{(q-1)}}(\mathcal{C}_{G'}) \neq 0$.

 On the other hand, if $\beta \neq 0$, then by Lemma 
 \ref{lem:AddingLeaves} we have
 $$i=|T|-|V_{T}(f)| = |T|-(|T'|-|V_{T'}(f')|) \Longleftrightarrow
 |T|- i = |T'|-|V_{T'}(f')|.$$
 Note that $f'$ must have at least one non-zero 
 term, so $|V_{T'}(f')| \neq |T'|$. 
 But this then implies that $A_{|T|-i}(\mathcal{C}_{G'}) \neq 0$
 with $|T|-i \neq 0$.
\end{proof}

\begin{cor}\label{cor:mincodebounds}
     Let $G$ be a graph on vertices $\set{t_1, \ldots, t_s, t_{s+1}}$, 
    where $t_{s+1}$ is a leaf that is adjacent to $t_s$. 
    Set $G' = G \sm \set{t_{s+1}}$.   Then:
    \begin{enumerate}
        \item $\delta(\mathcal{C}_{G}) \leq (q-1)\delta(\mathcal{C}_{G'})$.
        \item $\delta(\mathcal{C}_{G'}) \leq \delta(\mathcal{C}_{G})/(q-1)$ or $\delta(\mathcal{C}_{G'}) \leq |T|-\delta(\mathcal{C}_{G})$.
    \end{enumerate}
    In particular, if there exists an $f \in KE(G)$ with 
    $f = f'(t_1,\ldots,t_s) + 0t_st_{s+1}$ such that 
    $\delta(\mathcal{C}_G) = |T| - |V_T(f)|$, then $\delta(\mathcal{C}_G)
    = (q-1)\delta(\mathcal{C}_{G'})$. 
\end{cor}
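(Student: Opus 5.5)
The plan is to read both parts of the corollary off Theorem \ref{thm:relateWeightDistributions} and Lemma \ref{lem:AddingLeaves}, using Lemma \ref{lem:EdgeCodesParameters}(3): $\delta(\mathcal{C})$ is the smallest index $i \geq 1$ with $A_i(\mathcal{C}) \neq 0$.

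\emph{Part (1).} Apply Theorem \ref{thm:relateWeightDistributions}(1) with $i = \delta(\mathcal{C}_{G'})$. Since $A_{\delta(\mathcal{C}_{G'})}(\mathcal{C}_{G'}) \neq 0$, that theorem gives $A_{(q-1)\delta(\mathcal{C}_{G'})}(\mathcal{C}_G) \geq A_{\delta(\mathcal{C}_{G'})}(\mathcal{C}_{G'}) > 0$. Hence a codeword of weight $(q-1)\delta(\mathcal{C}_{G'})$ exists in $\mathcal{C}_G$, and minimality gives (1).

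\emph{Part (2).} Let $i = \delta(\mathcal{C}_G)$. First handle the degenerate case $i = |T|$. Then the single-term codeword $t_st_{s+1}$ realizes the minimum. A nonzero $f' \in KE(G')$ has weight at most $|T'| \le |T| - |T'| = (q-2)(q-1)^s$ when $q \ge 3$. Hence the second alternative holds, since $\delta(\mathcal{C}_{G'}) \le |T'| \le |T| - \delta(\mathcal{C}_G) + \ldots$. More simply, $|T| - |T| = 0$ fails here, so instead I will use the first alternative: $\delta(\mathcal{C}_{G'}) \le |T'| = |T|/(q-1)$.

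Otherwise $i \neq |T|$, and Theorem \ref{thm:relateWeightDistributions}(2) gives either $A_{i/(q-1)}(\mathcal{C}_{G'}) \neq 0$ or $A_{|T|-i}(\mathcal{C}_{G'}) \neq 0$, with indices that are positive integers. The proof of that theorem shows $|T| - i \neq 0$, and $i/(q-1)$ is an integer weight in the first case. In either case, minimality of $\delta(\mathcal{C}_{G'})$ yields the corresponding inequality.

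\emph{The ``in particular'' statement.} Suppose $f = f' + 0\,t_st_{s+1}$ attains $\delta(\mathcal{C}_G)$; note $f' \neq 0$. Lemma \ref{lem:AddingLeaves} gives $|V_T(f)| = (q-1)|V_{T'}(f')|$, so
$$\delta(\mathcal{C}_G) = |T| - |V_T(f)| = (q-1)\bigl(|T'| - |V_{T'}(f')|\bigr) \ge (q-1)\delta(\mathcal{C}_{G'}).$$
Combined with (1), this gives equality.

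The only delicate step is the bookkeeping in (2): confirming that the indices produced are genuine nonzero weights, and disposing of the case $\delta(\mathcal{C}_G) = |T|$ separately as above.
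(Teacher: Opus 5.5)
Your proposal is correct and follows the paper's proof: parts (1) and (2) are read off from Theorem~\ref{thm:relateWeightDistributions} at $i=\delta(\mathcal{C}_{G'})$ and $i=\delta(\mathcal{C}_G)$. The ``in particular'' clause comes from the $\beta=0$ case of Lemma~\ref{lem:AddingLeaves} combined with (1).

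Your extra treatment of $\delta(\mathcal{C}_G)=|T|$, which the paper skips, is fine once you delete the abandoned ``second alternative'' sentence. In fact that case cannot occur when $G'$ has an edge: then $G$ has two edges, and a two-term polynomial such as $t_at_b-t_st_{s+1}$ already vanishes at $(1,\ldots,1)\in T$.
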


\begin{proof}
For the first part, apply Theorem \ref{thm:relateWeightDistributions} (1) to 
$i = \delta(\mathcal{C}_{G'})$.  The second part follows
from Theorem \ref{thm:relateWeightDistributions} (2) 
when $i = \delta(\mathcal{C}_{G})$.  As shown
within the proof of Theorem \ref{thm:relateWeightDistributions}, the
first of the two inequalities in the second part will occur if the coefficient of
$t_st_{s+1}$ in $f$ is zero.
\end{proof}

\begin{rmk}
As an easy observation, the first inequality in Corollary 
\ref{cor:mincodebounds} (2)
always holds if 
$|T|-\delta(\mathcal C_G)\leq\frac{\delta(\mathcal C_G)}{q-1}$, which
is equivalent to $\delta(\mathcal C_G)\geq |T|\frac{q-1}{q}$. When $q$ is arbitrarily large, this would mean $\delta(\mathcal C_G)$ is close to $|T|$, which is the length of the code. However, if we use the Singleton bound, this would imply low dimension (that is, a small number of edges in $G$) and this case wouldn't be very interesting. 
\end{rmk}

We now consider a more general situation.
Let $G$ be a graph on the vertex set
$\{t_1,\ldots,t_s\}$.  Suppose that
at the vertices $t_{i_1},\ldots,t_{i_n}$, 
a ``tree'' is 
attached.  More specifically, for each $j=1,\ldots,n$,
there are $s_j$ vertices $z_{j,1},\ldots,z_{j,s_j}$
such that the induced subgraph on 
$\{t_{i_j},z_{j,1},\ldots,z_{j,s_j}\}$ is 
a tree, which we denote as $T_{i_j}$. 
Note that each $T_{i_j}$ has $s_j$ edges and $s_j+1$ vertices.
We let $H$ denote the graph $G$
with the attached trees and write it as
 $H = G \cup T_{i_1} \cup \cdots \cup T_{i_n}$. In other words, 
 $H$ is the graph on the vertex set 
$$\{t_1,\ldots,t_s\} \cup 
\{z_{j,k} ~:~ 1 \leq j \leq n,~ 1 \leq k \leq s_j\}$$
with $s+s_1+\cdots+s_n$ vertices.
See   Figure \ref{fig:generalpicture} for a representation
of such a graph.

\begin{figure}[h!]
\centering
\includegraphics[width=8cm]{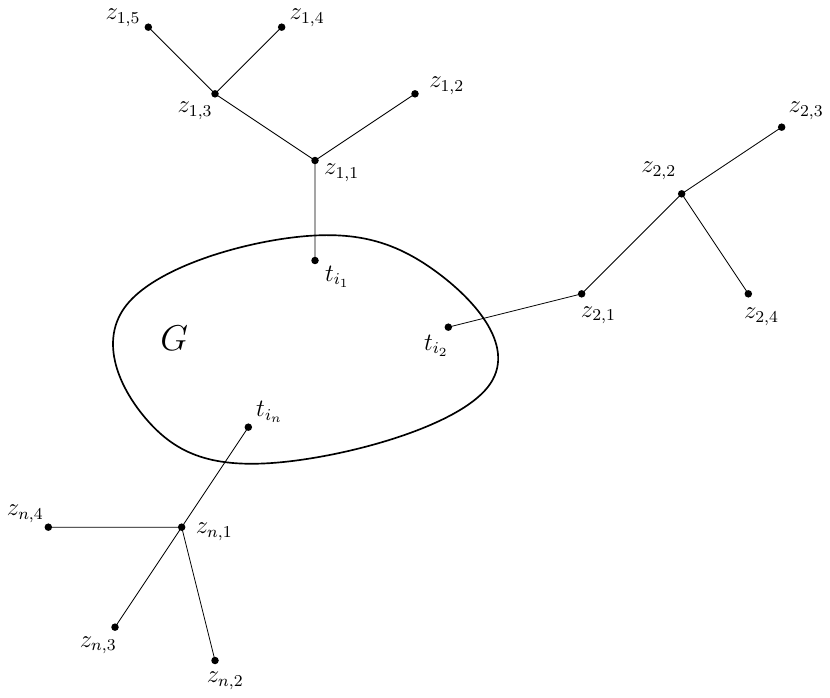}
\caption{A graph $G$ with  trees attached at
the vertices $t_{i_1},\ldots,t_{i_n}$}\label{fig:generalpicture}
\end{figure}

If $H$ is such a graph and $f \in KE(H)$, then
we can write $f$ as 
$$f(t_1,\ldots,t_s,z_{1,1},\ldots,z_{n,s_n}) =
g(t_1,\ldots,t_s) + g'(t_{i_1},\ldots,t_{i_n},
z_{1,1},\ldots,z_{n,s_n}),$$
with $g \in KE(G)$ and $g' \in 
KE(T_{i_1} \cup \cdots \cup T_{i_n})$.
The polynomial $g'$ can have up to $s_1 + \cdots + s_n$
non-zero terms since each tree $T_{i_j}$ has $s_j$ edges.
By repeated use of Lemma \ref{lem:AddingLeaves}, 
we can relate  the zeroes of $f$ to those of $g$.  
Consequently, the next result allows us to 
remove ``trees'' attached to $G$ if we are interested
in finding the number of zeroes of $f \in KE(H)$.

\begin{thm}\label{thm:attachingtrees}
Let $G$ be any graph on 
$s$ vertices.
    With the notation as above, let 
    $H = G \cup T_{i_1} \cup \cdots \cup T_{i_n}$,
    where
    $T_{i_j}$ is a tree with $s_j$ edges attached at vertex $t_{i_j}$ for 
    $j=1,\ldots,n$. For any $f \in KE(H)$,
    write
$$f(t_1,\ldots,t_s,z_{1,1},\ldots,z_{n,s_n}) =
g(t_1,\ldots,t_s) + g'(t_{i_1},\ldots,t_{i_n},z_{1,1},\ldots,z_{n,s_n}),$$
with $g \in KE(G)$ and $g' \in 
KE(T_{i_1} \cup \cdots \cup T_{i_n})$.
Suppose that $g'$ has $r$ non-zero terms.  
Then
    $$|V_{(\mathbb{F}_q^*)^m}(f)| = 
    (-1)^{r}(q-1)^{p-r}|V_{(\mathbb{F}_q^*)^s}(g)| + 
    \sum_{i=1}^{r}(-1)^{i+1}(q-1)^{m-i}\,,$$
    where $m=s+s_{1}+\cdots+s_{n}$ and 
    $p = s_{1}+\cdots + s_{n}$.   
\end{thm}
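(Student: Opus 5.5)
Let me argue by induction on $p = s_1+\cdots+s_n$, the total number of vertices added outside $G$. Each step peels off one leaf of $H$ and applies Lemma \ref{lem:AddingLeaves}. For the base case $p=0$ we have $H=G$, $m=s$, $g'=0$ and $r=0$, so the formula reads $|V(f)| = |V(g)|$, which holds trivially.

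For $p\ge 1$, choose some $j$ with $s_j\ge 1$. The tree $T_{i_j}$ has at least two vertices, so it has at least two leaves; at least one of them is some $z_{j,k}$, not the root $t_{i_j}$. That vertex $z=z_{j,k}$ has degree one in $T_{i_j}$. Since the $z$'s are adjacent only to vertices of their own tree, $z$ is also a leaf of $H$. Let $w$ be its neighbour, which is either $t_{i_j}$ or another $z_{j,k'}$.

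Now remove $z$. The graph $H_0=H\setminus\{z\}$ is again of the required form, with $T_{i_j}$ replaced by $T_{i_j}\setminus\{z\}$. It has $m-1$ vertices and total tree size $p-1$. Write $f = f_0 + \beta wz$, where $f_0 = g + g'_0 \in KE(H_0)$ and $g'_0$ is $g'$ with the term $\beta wz$ deleted. Note that the $KE(G)$-part $g$ is unchanged.

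\emph{Case $\beta=0$.} Then $g'_0$ still has $r$ terms. Lemma \ref{lem:AddingLeaves} and the induction hypothesis give
\[
|V(f)| = (q-1)\Bigl[(-1)^r(q-1)^{p-1-r}|V(g)|+\sum_{i=1}^{r}(-1)^{i+1}(q-1)^{m-1-i}\Bigr],
\]
which is exactly the claimed formula.

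\emph{Case $\beta\neq0$.} Then $g'_0$ has $r-1$ terms, and the lemma gives $|V(f)| = (q-1)^{m-1} - |V(f_0)|$. By induction,
\[
|V(f_0)| = (-1)^{r-1}(q-1)^{p-r}|V(g)| + \sum_{i=1}^{r-1}(-1)^{i+1}(q-1)^{m-1-i}.
\]
Negating and reindexing with $i\mapsto i+1$ turns $-\sum_{i=1}^{r-1}(-1)^{i+1}(q-1)^{m-1-i}$ into $\sum_{i=2}^{r}(-1)^{i+1}(q-1)^{m-i}$. Adding $(q-1)^{m-1}$ supplies the missing $i=1$ term, and the first term becomes $(-1)^r(q-1)^{p-r}|V(g)|$. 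Both cases match the statement.

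The only delicate points are these. First, we must choose a leaf of $H$ that is one of the $z$'s rather than a vertex of $G$; the two-leaves fact guarantees this. Second, the exponent $p-r$ must stay consistent in the $\beta\neq0$ case, where $p$ and $r$ drop together; this is exactly why the formula is stated with $p-r$. Everything else is bookkeeping of the alternating sum.
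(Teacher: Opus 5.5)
Your proof is correct and follows the paper's argument: you induct on $p$, peel off a leaf $z$ of $H$ lying in one of the attached trees, and apply Lemma~\ref{lem:AddingLeaves} in two cases according to whether the coefficient of the edge at $z$ vanishes. The differences are cosmetic. You start the induction at the trivial case $p=0$ rather than $p=1$, and you use the two-leaves fact to justify explicitly that $z$ can be chosen away from the root, which the paper only asserts.
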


\begin{proof}
We begin by noting that the number of vertices of $H$ is $m$ and the 
number of edges in $T_{i_1} \cup \cdots \cup T_{i_n}$ is $p$. 
Thus, if $g'$ has $r$ non-zero terms, we must have $0 \leq r \leq p$.

We will do induction on $p$.  If $p=1$, then 
there is exactly one vertex
$t_{i_1}$ in $G$ with a tree attached to it, and a single vertex $z_{1,1}$ such that the induced graph on
$\{t_{i_1},z_{1,1}\}$ is a tree.   Thus, $z_{1,1}$ is a leaf and
we are in the situation of Lemma \ref{lem:AddingLeaves}.  
Hence, if $g'$
has $r=0$ non-zero terms, we have 
$$|V_{(\mathbb{F}_q^*)^{s+1}}(f)| = 
(q-1)|V_{(\mathbb{F}_q^*)^s}(g)| =
    (-1)^0(q-1)^{1-0}|V_{(\mathbb{F}_q^*)^s}(g)| + 
    \sum_{i=1}^{0}(-1)^{i+1}(q-1)^{m-i},$$
    where we use the convention that $\sum_{i=a}^b x_i = 0$ if $b<a$.
On the other hand, if $r=1$, the same lemma gives
$$|V_{(\mathbb{F}_q^*)^{s+1}}(f)| = (q-1)^s - |V_{(\mathbb F_q^*)^s}(g)|
= (-1)^{1}(q-1)^{1-1}|V_{(\mathbb F_q^*)^s}(g)|+ 
\sum_{i=1}^1(-1)^{i+1}(q-1)^{m-i}.$$
In both cases, the formula in the statement holds.
Note that in both of these formulas, we are using the fact that $m=s+1$.

We now assume that $p = s_{1}+\cdots +s_{n} > 1$.
Without loss of generality, we can assume that
all $s_{j} > 0$ (if $s_{j} = 0$, that would mean
that no tree is attached to $t_{i_j}$).   
Thus $H = G \cup T_{i_1} \cup \cdots \cup T_{i_n}$, where $T_{i_j}$ is a tree attached at $t_{i_j}$ and has $s_j$ edges.  In $T_{i_n}$, we can
find a vertex $z$ that is a leaf in $H$.  Suppose 
$z$ is adjacent to $z'$, so  
$zz' \in KE(T_{i_1}\cup \cdots \cup T_{i_n})$. 

Set $H' = H \setminus \{z\}$.
We now consider the cases: 1) the term $zz'$ appears
in $g'$ or 2) the term $zz'$ does not appear in $g'$.
In the first case, we have $f = g+g'$, and 
$g' = g''+\beta zz'$ with
$\beta \neq 0$. Here, 
$g''$ is a polynomial that does not involve $z$ and must
have $r-1$ non-zero terms. By 
Lemma \ref{lem:AddingLeaves} we have
$$|V_{(\mathbb{F}_q^*)^m}(f)| = (q-1)^{m-1} - 
|V_{(\mathbb{F}_q^*)^{m-1}}(g+g'')|.$$
Now $g+g'' \in KE(H')$, where the number of vertices
of $H'$ is $m-1$ and the number of edges among
the attached trees is $p-1$.  Since $g''$ has $r-1$ non-zero
terms, induction now implies that 
  $$|V_{(\mathbb{F}_q^*)^{m-1}}(g+g'')| = 
    (-1)^{r-1}(q-1)^{p-1-(r-1)}|V_{(\mathbb{F}_q^*)^s}(g)| + 
    \sum_{i=1}^{r-1}(-1)^{i+1}(q-1)^{m-1-i}.$$
Applying this substitution now gives
\begin{eqnarray*}
|V_{(\mathbb{F}_q^*)^m}(f)| &=& (q-1)^{m-1} - 
|V_{(\mathbb{F}_q^*)^{m-1}}(g+g'')| =\\
&=&
(q-1)^{m-1}- (-1)^{r-1}(q-1)^{p-1-(r-1)}|V_{(\mathbb{F}_q^*)^s}(g)| -
    \sum_{i=1}^{r-1}(-1)^{i+1}(q-1)^{m-1-i} =\\
    &=& (-1)^r(q-1)^{p-r}|V_{(\mathbb{F}_q^*)^s}(g)|
    + \sum_{i=1}^{r}(-1)^{i+1}(q-1)^{m-i}.
\end{eqnarray*}
And thus the formula holds in this case.

The proof for the second case is similar.  For completeness,
we work out all the details.  In this case, 
$g'= g'' + \beta zz' = g''$ since $\beta = 0$.  By
Lemma \ref{lem:AddingLeaves} we have
$$|V_{(\mathbb{F}_q^*)^m}(f)| = (q-1)|V_{(\mathbb{F}_q^*)^{m-1}}(g+g'')|.$$
As above, $g+g'' \in KE(H')$ where the number
of vertices of $H'$ is $m-1$ and the number
of edges among the attached trees is $p-1$.  Since
$g'$ has $r$ terms, so does $g''$.  Induction then
implies 
 $$|V_{(\mathbb{F}_q^*)^{m-1}}(g+g'')| = 
    (-1)^{r}(q-1)^{p-1-r}|V_{(\mathbb{F}_q^*)^s}(g)| + 
    \sum_{i=1}^{r}(-1)^{i+1}(q-1)^{m-1-i}\,.$$
Substituting this expression into our earlier
expression gives
\begin{eqnarray*}
    |V_{(\mathbb{F}_q^*)^m}(f)| &=& (q-1)|V_{(\mathbb{F}_q^*)^{m-1}}(g+g'')|= \\
    & = & (q-1)\left[(-1)^{r}(q-1)^{p-1-r}|V_{(\mathbb{F}_q^*)^s}(g)| + 
    \sum_{i=1}^{r}(-1)^{i+1}(q-1)^{m-1-i}\right]= \\
    &=& (-1)^r(q-1)^{p-r}|V_{(\mathbb{F}_q^*)^s}(g)|
    + \sum_{i=1}^{r}(-1)^{i+1}(q-1)^{m-i},
\end{eqnarray*}
as desired.
\end{proof}

One of the main results of \cite{Jar25} is now a corollary
of the previous result.

\begin{cor}[\hspace{-0.01cm}{\cite[Proposition 3.14]{Jar25}}]\label{cor:treeFormula}
If $G$ is a tree on $s \geq 2$ vertices, 
and if $f \in KE(G)$ 
has $r \in \{1,2,\ldots,s-1\}$ non-zero terms, then
$$|V_{(\mathbb{F}_q^*)^s}(f)| = \sum_{i=1}^{r-1}(-1)^{i+1}(q-1)^{s-i}.$$
\end{cor}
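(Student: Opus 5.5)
We derive Corollary~\ref{cor:treeFormula} from Theorem~\ref{thm:attachingtrees} by viewing the tree as a single edge with trees attached at its endpoints. Since $f$ has $r\geq 1$ non-zero terms, pick an edge $\{t_a,t_b\}$ of $G$ whose monomial $t_at_b$ appears in $f$ with non-zero coefficient $\alpha$. Let $G_0$ be the graph on $\{t_a,t_b\}$ with the single edge $\{t_a,t_b\}$. Deleting this edge from the tree $G$ leaves two subtrees, one containing $t_a$ and one containing $t_b$. Hence $G = G_0 \cup T_a \cup T_b$, where $T_a$ and $T_b$ are trees attached at $t_a$ and $t_b$. This is exactly the setup of Theorem~\ref{thm:attachingtrees} with $n\le 2$; if one of the subtrees is just the single vertex, we simply omit it. The parameters are $m=s$ and $p=s-2$, since $T_a\cup T_b$ carries the remaining $s-2$ edges.

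Next we decompose $f$ as $f = g + g'$ with $g=\alpha t_at_b\in KE(G_0)$ and $g'$ the sum of the remaining terms, so that $g'$ has $r-1$ non-zero terms. Because $g$ is a non-zero monomial, it has no zeroes on the torus, so $|V_{(\mathbb{F}_q^*)^2}(g)|=0$. Substituting into Theorem~\ref{thm:attachingtrees} with $r-1$ in place of $r$, the first summand vanishes and we get
$$|V_{(\mathbb{F}_q^*)^s}(f)| = 0 + \sum_{i=1}^{r-1}(-1)^{i+1}(q-1)^{s-i},$$
which is the claimed formula.

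The only point needing care is the edge case $s=2$, where no trees are attached and $p=0$. Here $f=\alpha t_1t_2$ has no zeroes, which agrees with the formula's empty sum for $r=1$. So this case is either covered by the $p=0$ reading of the theorem or checked directly. The case $r=1$ for general $s$ is also immediate, since a monomial has no zeroes on the torus. Beyond this, the decomposition of a tree at a chosen edge is routine, so I expect no real obstacle: the key trick is to choose $G_0$ to be an edge actually present in $f$, which kills the $|V(g)|$ term.
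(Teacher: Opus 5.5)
Your proof is correct. It uses the same tool as the paper, Theorem~\ref{thm:attachingtrees}, but with a different decomposition of the tree.

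The paper takes the base graph to be a single leaf $t$, so the whole tree (with $p=s-1$ edges) is attached at $t$. Then $KE$ of the base is $\{0\}$, so $|V_{(\mathbb{F}_q^*)^1}(0)|=q-1$, and the first summand $(-1)^r(q-1)^{s-r}$ cancels the $i=r$ term of the sum. You instead take the base to be an edge in the support of $f$. Then $g=\alpha t_at_b$ has no zeroes on the torus, the first summand vanishes outright, and the shift $r\mapsto r-1$ gives exactly the sum up to $r-1$.

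What each choice buys:
\begin{itemize}
\item The paper's decomposition is independent of $f$, since any leaf works. For $s=2$ it also stays in the range $p\ge 1$ covered by the induction in the proof of Theorem~\ref{thm:attachingtrees}.
\item Your decomposition depends on $f$ and needs $r\ge 1$; both are harmless here. In exchange it avoids the degenerate one-vertex, edgeless base graph and the cancellation step.
\item For $s=2$ your choice gives $p=0$. You correctly handle that case directly; it is trivial because then $f=g$.
\end{itemize}
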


\begin{proof}
Let $t$ be any leaf of $G$ and let $G'$ be the graph
consisting of just the vertex $t$.  Then $G$
can be viewed as the graph obtained by
attaching a tree with $s-1$ edges to the graph
$G'$ at the vertex $t$.  
So $m=s$ and $p=s-1$.  We apply Theorem \ref{thm:attachingtrees},
where we highlight that we are applying the theorem to the graph
$G'$ with exactly one vertex.   If $f \in KE(G)$ has $r$ non-zero terms,
then by  Theorem \ref{thm:attachingtrees},
$$|V_{(\mathbb{F}_q^*)^s}(f)| = 
(-1)^r(q-1)^{s-1-r}|V_{(\mathbb{F}^*_q)^1}(g)|
+ \sum_{i=1}^r(-1)^{i+1}(q-1)^{s-i}.$$
We now  use the fact that $|V_{(\mathbb F_q^*)^1}(g)|=q-1$ for
    $g \in KE(G')=\{0\}$ to reduce 
    this expression to
    $$|V_{(\mathbb{F}_q^*)^s}(f)| = 
(-1)^r(q-1)^{s-1-r}(q-1)
+ \sum_{i=1}^r(-1)^{i+1}(q-1)^{s-i} = \sum_{i=1}^{r-1}(-1)^{i+1}(q-1)^{s-i},$$
as desired.
\end{proof}

%%%%%%%%%%%%%%%%%%%%%%%%%%%%%%%%%%%%
\section{Edge codes of cycles}\label{sec:cycles}

Before moving on to the study of arbitrary unicyclic graphs, we will begin by considering the simplest of them: cycles.

Given a positive integer $s$, we denote by $C_s$ the cycle of length $s$. To determine the weight distribution of the edge code $\mathcal C_{C_s}$, we consider the $K$-vector space $KE(C_s)$ and compute the number of zeroes in the affine torus of all polynomials in this $K$-vector space.

It is clear that if $f\in KE(C_s)$ has less than $s$ non-zero terms, then the edges corresponding to all non-zero terms in $f$ constitute a tree and the number of zeroes in the affine torus is already known in this case by Proposition \ref{prop:DeliozeroesTrees}
(or Corollary \ref{cor:treeFormula}).

Consequently, we have to focus on those polynomials whose terms correspond to all the edges of the cycle. The number of roots of these polynomials can be computed recursively, as we show in the next result:

\begin{prop}\label{prop:RecursiveFormulaCycles}
    For every $s\geq 3$, let $$f_s=\alpha_{1,2}^st_1t_2+\alpha_{2,3}^st_2t_3+\dots+\alpha_{s-1,s}^st_{s-1}t_s+\alpha_{1,s}^st_1t_s\,, \text{ with } \alpha_{i,j}^s\in\mathbb F_q^* \text{ for all } i,j\,, $$ be a polynomial associated to the cycle of length $s$. Then for all $s \geq 5$,
    $$|V_{(\mathbb F_q^*)^s}(f_s)|=q\ |V_{(\mathbb F_q^*)^{s-2}}(f_{s-2})|-(q-1)^{s-2}+\sum_{i=1}^{s-2}(-1)^{i+1}\ (q-1)^{s-i}\,,$$ where $\alpha_{i,i+1}^{s-2}=\alpha_{i+2,i+3}^{s}$ for all $i\in\{1,2,\dots,s-3\}$ and $\alpha_{1,s-2}^{s-2}=-\frac{\alpha_{2,3}^s\alpha_{1,s}^s}{\alpha_{1,2}^s}$.
\end{prop}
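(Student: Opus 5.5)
The plan is to eliminate the variable $t_2$, which appears only in the two terms $\alpha_{1,2}^st_1t_2$ and $\alpha_{2,3}^st_2t_3$. Write
$$f_s = t_2\,L + R,\qquad L=\alpha_{1,2}^st_1+\alpha_{2,3}^st_3,\qquad R=\sum_{i=3}^{s-1}\alpha_{i,i+1}^st_it_{i+1}+\alpha_{1,s}^st_1t_s,$$
where $L$ and $R$ involve only $t_1,t_3,\ldots,t_s$. We then count the zeroes of $f_s$ fiberwise over the points $Q=(a_1,a_3,\ldots,a_s)\in(\mathbb F_q^*)^{s-1}$.

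\emph{Fiber count.} Suppose first that $L(Q)\neq 0$. Then $t_2=-R(Q)/L(Q)$ is the unique solution in $\mathbb F_q$. It lies in $\mathbb F_q^*$ exactly when $R(Q)\neq 0$, so such a $Q$ contributes $1$ if $R(Q)\neq0$ and $0$ otherwise. Now suppose $L(Q)=0$. Then $Q$ contributes $q-1$ if $R(Q)=0$ and $0$ otherwise. Let $N_L$, $N_R$ and $N_{LR}$ denote the numbers of $Q$ with $L(Q)=0$, with $R(Q)=0$, and with both vanishing. Inclusion--exclusion gives
$$|V_{(\mathbb F_q^*)^s}(f_s)| = \bigl((q-1)^{s-1}-N_L-N_R+N_{LR}\bigr)+(q-1)N_{LR} = (q-1)^{s-1}-N_L-N_R+q\,N_{LR}.$$

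\emph{Computing the three counts.}

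First, $N_L=(q-1)^{s-2}$. Indeed, $L=0$ forces $t_1=-\alpha_{2,3}^st_3/\alpha_{1,2}^s$, which is nonzero, and $t_3,\ldots,t_s$ are free.

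Second, $R$ is the polynomial of the path $t_1 - t_s - t_{s-1} - \cdots - t_3$. This is a tree on $s-1$ vertices, and $R$ has all $s-2$ of its terms non-zero. Corollary \ref{cor:treeFormula} with $r=s-2$ therefore gives
$$N_R=\sum_{i=1}^{s-3}(-1)^{i+1}(q-1)^{s-1-i}.$$

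Third, for $N_{LR}$ substitute $t_1=-\alpha_{2,3}^st_3/\alpha_{1,2}^s$ into $R$. This turns $R$ into
$$\alpha_{3,4}^st_3t_4+\cdots+\alpha_{s-1,s}^st_{s-1}t_s-\frac{\alpha_{2,3}^s\alpha_{1,s}^s}{\alpha_{1,2}^s}\,t_3t_s.$$
After relabelling $t_{i+2}\mapsto t_i$, this is exactly the polynomial $f_{s-2}$ on the cycle $C_{s-2}$ with the coefficients prescribed in the statement. The hypothesis $s\geq5$ is used precisely here, so that $s-2\geq3$ and $C_{s-2}$ is a genuine cycle. Since the substitution is a bijection onto $(\mathbb F_q^*)^{s-2}$, we get $N_{LR}=|V_{(\mathbb F_q^*)^{s-2}}(f_{s-2})|$.

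\emph{Final bookkeeping.} It remains to check that
$$(q-1)^{s-1}-(q-1)^{s-2}-\sum_{i=1}^{s-3}(-1)^{i+1}(q-1)^{s-1-i}$$
equals $-(q-1)^{s-2}+\sum_{i=1}^{s-2}(-1)^{i+1}(q-1)^{s-i}$. Reindex the sum with $j=i+1$. It becomes $-\sum_{j=2}^{s-2}(-1)^{j}(q-1)^{s-j}=\sum_{j=2}^{s-2}(-1)^{j+1}(q-1)^{s-j}$, and the leading term $(q-1)^{s-1}$ supplies the missing $j=1$ term. This yields the stated formula.

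The main obstacle is the case analysis in the fiber count. One must treat the degenerate fibers ($L=0$) separately, and in the generic fibers one must check that the solution for $t_2$ lies in the torus rather than merely in $\mathbb F_q$. Once the count is organized as above, the rest is the tree formula plus a reindexing.
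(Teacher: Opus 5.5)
Your proof is correct and is essentially the paper's argument. The paper eliminates $t_1$, writing $f_s=t_1(\alpha_{1,2}t_2+\alpha_{1,s}t_s)+h$, rather than $t_2$; otherwise the split into vanishing versus non-vanishing linear coefficient, the inclusion--exclusion using the path count from Corollary \ref{cor:treeFormula}, and the substitution are the same, and the substitution yields exactly the same $f_{s-2}$ on $t_3,\dots,t_s$.
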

\begin{proof}
    To simplify the notation, let 
    \begin{eqnarray*}f_s& =& \alpha_{1,2}t_1t_2+\alpha_{2,3}t_2t_3+\dots+\alpha_{s-1,s}t_{s-1}t_s+\alpha_{1,s}t_1t_s \ =\\
    &=&t_1(\alpha_{1,2}t_2+\alpha_{1,s}t_s)+\alpha_{2,3}t_2t_3+\dots+\alpha_{s-1,s}t_{s-1}t_s\, ,
    \end{eqnarray*} 
    with $\alpha_{i,j}\in\mathbb F_q^*$ for all $i,j$.

    From the previous expression, it is easy to see that $f_s=0$ if and only if one of the two following situations holds:

    \smallskip

\noindent \underline{Case (1)}: $\alpha_{1,2}t_2+\alpha_{1,s}t_s=0$ and $\alpha_{2,3}t_2t_3+\dots+\alpha_{s-1,s}t_{s-1}t_s=0$.

In this situation, $t_2$ is determined by the value of $t_s$ and the second condition can be rewritten as $-\frac{\alpha_{2,3}\alpha_{1,s}}{\alpha_{1,2}}t_st_3+\dots+\alpha_{s-1,s}t_{s-1}t_s=0$, where the left-hand side is a polynomial associated to a cycle of length $s-2$. Since $t_1$ can be arbitrarily chosen, the number of zeroes of $f_s$ corresponding to this situation is $|V_{(\mathbb F_q^*)^s,1}(f_s)|=(q-1)\, |V_{(\mathbb F_q^*)^{s-2}}(f_{s-2})|$.
    
 \medskip

\noindent \underline{Case (2)}: $\alpha_{1,2}t_2+\alpha_{1,s}t_s\neq0$, $\alpha_{2,3}t_2t_3+\dots+\alpha_{s-1,s}t_{s-1}t_s\neq0$ and $t_1=-\frac{\alpha_{2,3}t_2t_3+\dots+\alpha_{s-1,s}t_{s-1}t_s}{\alpha_{1,2}t_2+\alpha_{1,s}t_s}$.

In this situation, $t_1$ will be determined by the remaining variables and we have to compute the number of $(s-1)$-tuples $(t_2,t_3,\dots,t_s)$ satisfying the first two conditions. If we denote $g:=\alpha_{1,2}t_2+\alpha_{1,s}t_s$ and $h:=\alpha_{2,3}t_2t_3+\dots+\alpha_{s-1,s}t_{s-1}t_s$, then the number of zeroes of $f_s$ corresponding to this situation is \begin{eqnarray*}
    |V_{(\mathbb F_q^*)^s,2}(f_s)|&=&(q-1)^{s-1}-|V_{(\mathbb F_q^*)^{s-1}}(g)\cup V_{(\mathbb F_q^*)^{s-1}}(h)|\ =\\
    &=& (q-1)^{s-1}-|V_{ (\mathbb F_q^*)^{s-1}}(g)|-|V_{(\mathbb F_q^*)^{s-1}}(h)|+|V_{(\mathbb F_q^*)^{s-1}}(g)\cap V_{(\mathbb F_q^*)^{s-1}}(h)|\ =\\
    &=&(q-1)^{s-1}-(q-1)^{s-2}-|V_{(\mathbb F_q^*)^{s-1}}(h)|+\frac{|V_{(\mathbb F_q^*)^s,1}(f_s)|}{q-1}\,.
\end{eqnarray*}

Since $h$ is a polynomial associated to a path of length $s-2$, 
which in particular is a tree, we can apply 
Proposition \ref{prop:DeliozeroesTrees} (or Corollary \ref{cor:treeFormula})
to get 
\begin{eqnarray*}
    |V_{(\mathbb F_q^*)^s,2}(f_s)|&=&
    (q-1)^{s-1}-(q-1)^{s-2}-\left(\sum_{i=1}^{s-3}(-1)^{i+1}(q-1)^{(s-1)-i}\right)+|V_{(\mathbb F_q^*)^{s-2}}(f_{s-2})|\ =\\
    &=&|V_{(\mathbb F_q^*)^{s-2}}(f_{s-2})|-(q-1)^{s-2}+\sum_{i=1}^{s-2}(-1)^{i+1}(q-1)^{s-i}\,.
\end{eqnarray*}

Combining the two situations, we get that 
\begin{eqnarray*}
|V_{(\mathbb F_q^*)^s}(f_s)|&=&
 |V_{(\mathbb F_q^*)^s,1}(f_s)|+|V_{(\mathbb F_q^*)^s,2}(f_s)|\ =\\
&=&q\ |V_{(\mathbb F_q^*)^{s-2}}(f_{s-2})|-(q-1)^{s-2}+
\sum_{i=1}^{s-2}(-1)^{i+1}\ (q-1)^{s-i}\,.
\end{eqnarray*} 
\end{proof}

This recursive formula for computing the number of zeroes in the affine torus 
$T$ of every polynomial associated to a cycle leads us to different results 
depending on whether $s$ is odd or even.  This fact will follow from the 
different behaviors of the codes constructed from $C_3$ and $C_4$.

\subsection{Odd cycles}
In the case of cycles of odd length, we first obtain an explicit formula 
for the number of zeroes in the affine torus of every polynomial 
associated to the complete cycle.

\begin{prop}\label{prop:zeroesOddCycles}
    Let $s\geq 3$ be an odd number, and let $f_s$ be a polynomial whose terms are in one-to-one correspondence with all the edges of $C_s$. Then $$|V_{(\mathbb F_q^*)^s}(f_s)|=\sum_{i=1}^{s-1}(-1)^{i+1}(q-1)^{s-i}\,.$$
\end{prop}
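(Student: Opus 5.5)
The plan is to argue by induction on the odd integer $s\geq 3$, using Proposition \ref{prop:RecursiveFormulaCycles} for the inductive step and a direct count for the base case $s=3$. Throughout, write $N_s=|V_{(\mathbb F_q^*)^s}(f_s)|$. The target formula depends only on $s$ and $q$, not on the coefficients $\alpha_{i,j}^s$. This matters because the recursion replaces $f_s$ by an $f_{s-2}$ whose new coefficient $\alpha_{1,s-2}^{s-2}=-\alpha_{2,3}^s\alpha_{1,s}^s/\alpha_{1,2}^s$ is again in $\mathbb F_q^*$. So $f_{s-2}$ is again a polynomial using all edges of $C_{s-2}$ with non-zero coefficients, and the inductive hypothesis applies to it.

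\emph{Base case $s=3$.} Write $f_3=t_1(\alpha_{1,2}t_2+\alpha_{1,3}t_3)+\alpha_{2,3}t_2t_3$. If $\alpha_{1,2}t_2+\alpha_{1,3}t_3=0$, there is no zero, since $\alpha_{2,3}t_2t_3\neq 0$. Otherwise $t_1=-\alpha_{2,3}t_2t_3/(\alpha_{1,2}t_2+\alpha_{1,3}t_3)$ is uniquely determined and lies in $\mathbb F_q^*$. Hence $N_3$ is the number of pairs $(t_2,t_3)\in(\mathbb F_q^*)^2$ with $\alpha_{1,2}t_2+\alpha_{1,3}t_3\neq 0$. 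For each $t_3$ exactly one $t_2\in\mathbb F_q^*$ is excluded, so
\[N_3=(q-1)^2-(q-1)=\sum_{i=1}^{2}(-1)^{i+1}(q-1)^{3-i}.\]

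\emph{Inductive step, $s\geq 5$ odd.} Assume $N_{s-2}=\sum_{i=1}^{s-3}(-1)^{i+1}(q-1)^{s-2-i}$. Proposition \ref{prop:RecursiveFormulaCycles} gives
\[N_s=q\,N_{s-2}-(q-1)^{s-2}+\sum_{i=1}^{s-2}(-1)^{i+1}(q-1)^{s-i}.\]
Split $q=(q-1)+1$. Reindexing by $j=i+1$ gives
\[(q-1)N_{s-2}=\sum_{i=1}^{s-3}(-1)^{i+1}(q-1)^{s-1-i}=-\sum_{j=2}^{s-2}(-1)^{j+1}(q-1)^{s-j}.\]
This cancels every term of the last sum except the $i=1$ term, leaving
\[N_s=(q-1)^{s-1}-(q-1)^{s-2}+N_{s-2}.\]
Now reindex $N_{s-2}$ with $k=i+2$, which preserves the sign $(-1)^{k+1}$, to get $N_{s-2}=\sum_{k=3}^{s-1}(-1)^{k+1}(q-1)^{s-k}$. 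The terms $(q-1)^{s-1}$ and $-(q-1)^{s-2}$ are exactly the $k=1$ and $k=2$ terms, so $N_s=\sum_{k=1}^{s-1}(-1)^{k+1}(q-1)^{s-k}$, as required.

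The only delicate points are the bookkeeping in the reindexing and checking that the inductive hypothesis may be applied to $f_{s-2}$, which needs its coefficient to be non-zero. Oddness of $s$ enters only because the recursion steps down by $2$, so odd $s$ reaches the base case $C_3$ rather than $C_4$.
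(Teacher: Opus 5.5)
Your proposal is correct and follows the paper's proof essentially step for step. Both arguments induct on odd $s$, use the same direct count $(q-1)(q-2)$ for $C_3$, and handle the inductive step via Proposition~\ref{prop:RecursiveFormulaCycles} with the split $q=(q-1)+1$ and reindexing. Your intermediate identity $N_s=(q-1)^{s-1}-(q-1)^{s-2}+N_{s-2}$ and your explicit check that $\alpha_{1,s-2}^{s-2}\neq 0$ (which the paper leaves implicit) make the bookkeeping somewhat cleaner, but the argument is the same.
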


\begin{proof}
    We proceed by induction on $s$.

    Assume first that $s=3$. In this case, 
    $$f_3=\alpha_{1,2}t_1t_2+\alpha_{2,3}t_2t_3+\alpha_{1,3}t_1t_3=t_1 (\alpha_{1,2}t_2+\alpha_{1,3}t_3)+\alpha_{2,3}t_2t_3\,,$$ 
    with $\alpha_{i,j}\in\mathbb F_q^*$ for all $i,j$.  Then 
    $f=0$ if and only if $\alpha_{1,2}t_2+\alpha_{1,3}t_3\neq 0$ and 
    $t_1=-\frac{\alpha_{2,3}t_2t_3}{\alpha_{1,2}t_2+\alpha_{1,3}t_3}$. Hence $t_1$ is determined by the remaining variables, and there is a forbidden value for $t_2$. This implies that $$|V_{(\mathbb F_q^*)^3}(f_3)|=(q-1)(q-2)=(q-1)^2-(q-1)=\sum_{i=1}^2(-1)^{i+1}(q-1)^{3-i}\,.$$

    Assume now that the result is true for every odd number smaller than some odd number $s$, and let us prove it for $s$. Applying Proposition \ref{prop:RecursiveFormulaCycles} and the induction hypothesis for $|V_{(\mathbb F_q^*)^{s-2}}(f_{s-2})|$, we get \begin{eqnarray*}
        |V_{(\mathbb F_q^*)^s}(f_s)|&=&q\ |V_{(\mathbb F_q^*)^{s-2}}(f_{s-2})|-(q-1)^{s-2}+\sum_{i=1}^{s-2}(-1)^{i+1}\ (q-1)^{s-i}\ =\\
        &=&((q-1)+1)\sum_{i=1}^{s-3}(-1)^{i+1}(q-1)^{s-2-i}-(q-1)^{s-2}+\sum_{i=1}^{s-2}(-1)^{i+1}\ (q-1)^{s-i}\ =\\
        %&=&\sum_{i=1}^{n-3}(-1)^{i+1}(q-1)^{n-1-i}+\sum_{i=1}^{n-3}(-1)^{i+1}(q-1)^{n-2-i}-\\&&-(q-1)^{n-2}+\sum_{i=1}^{n-2}(-1)^{i+1}\ (q-1)^{n-i}=\\
        &=&\sum_{i=2}^{s-2}(-1)^{i}(q-1)^{s-i}+\sum_{i=1}^{s-3}(-1)^{i+1}(q-1)^{s-2-i}- (q-1)^{s-2}\\
        &&\ \ +\, (q-1)^{s-1}+\sum_{i=2}^{s-2}(-1)^{i+1}\ (q-1)^{s-i}\ =\\
        &=&\sum_{i=1}^{s-1}(-1)^{i+1}(q-1)^{s-i}\,. 
        \end{eqnarray*} 
        \end{proof}

If we now combine Lemma \ref{lem:EdgeCodesParameters} and Propositions \ref{prop:DeliozeroesTrees} and \ref{prop:zeroesOddCycles}, we can completely determine the weight distribution of the edge code associated to an odd cycle:

\begin{thm}\label{thm:WeightDistributionOddCycles}
    Let $s\geq 3$ be an odd number. If $f\in KE(C_s)$ has $r$ non-zero
    terms with $r\in\{1,2,\dots,s\}$, then $$|V_{(\mathbb F_q^*)^s}(f)|=\sum_{i=1}^{r-1}(-1)^{i+1}(q-1)^{s-i}\,.$$
    
    Then the weight distribution of $\mathcal C_{C_s}$ can be expressed as follows: \begin{eqnarray*}
        A_{\sum_{i=0}^{r-1}(-1)^i(q-1)^{s-i}}({\mathcal C}_{C_s})={s\choose r}(q-1)^r \text{ for every }r\in\{1,2,\dots,s\},
    \end{eqnarray*} and $A_j(\mathcal C_{C_s})=0$ for the remaining values of $j$.

    Consequently, the minimum distance of $\mathcal C_{C_s}$ corresponds to $r=2$, that is, 
    $$\delta(\mathcal{C}_{C_s}) = (q-1)^{s-1}(q-2),$$ and it 
    is achieved with all polynomials 
    $f \in KE(C_s)$ with two non-zero terms.
\end{thm}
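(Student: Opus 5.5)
The plan is to split according to the number $r$ of non-zero terms of $f \in KE(C_s)$ and treat $r \le s-1$ and $r = s$ separately.

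\emph{Case $r \le s-1$.} The edges carrying the non-zero terms omit at least one edge of $C_s$, so they form a subgraph of a path on $s$ vertices, which is a tree. Hence $f$ lies in $KE(P_s)$, where $P_s$ is the path obtained by deleting a missing edge. Corollary~\ref{cor:treeFormula} (equivalently Proposition~\ref{prop:DeliozeroesTrees}) applied to this tree on $s$ vertices gives $|V_{(\mathbb F_q^*)^s}(f)|=\sum_{i=1}^{r-1}(-1)^{i+1}(q-1)^{s-i}$. For $r=1$ the sum is empty, which agrees with $V_T(t_it_j)=\emptyset$.

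\emph{Case $r=s$.} Now every edge carries a non-zero coefficient, and this is exactly Proposition~\ref{prop:zeroesOddCycles}, which gives the same formula with $r=s$. So the zero count depends only on $r$.

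For the weight distribution, Lemma~\ref{lem:EdgeCodesParameters}(3) says the weight of the codeword of $f$ is $(q-1)^s-|V_T(f)|$. Substituting the count gives
$$(q-1)^s-\sum_{i=1}^{r-1}(-1)^{i+1}(q-1)^{s-i}=\sum_{i=0}^{r-1}(-1)^i(q-1)^{s-i}.$$
The number of polynomials with exactly $r$ non-zero terms is $\binom{s}{r}(q-1)^r$, since we choose $r$ of the $s$ edges and a non-zero coefficient for each.

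To conclude that $A_{w_r}(\mathcal C_{C_s})=\binom{s}{r}(q-1)^r$, where $w_r=\sum_{i=0}^{r-1}(-1)^i(q-1)^{s-i}$, I need the weights $w_r$ to be pairwise distinct for $r=1,\dots,s$. Writing $x=q-1\ge 2$, we have $w_{r+1}-w_r=(-1)^r x^{s-r}$. So the sequence alternates, and each jump is strictly smaller than the previous one. Hence the odd-indexed values $w_1>w_3>\cdots$ decrease, the even-indexed values $w_2<w_4<\cdots$ increase, and every even-indexed value is below every odd-indexed value. This is the alternating-series argument; it shows the $w_r$ are distinct and that $\min_r w_r=w_2=x^s-x^{s-1}=(q-1)^{s-1}(q-2)$, attained exactly at $r=2$. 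The zero polynomial contributes only weight $0$, and all other $A_j$ vanish.

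The only delicate point is this distinctness and minimality check, which uses $q>2$ so that $x\ge2$ and the jumps strictly decrease. Everything else is direct from earlier results.
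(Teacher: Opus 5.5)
Your proposal is correct and follows essentially the paper's route: the tree formula (Corollary~\ref{cor:treeFormula}, applied to the path obtained by deleting a missing edge) for $r\le s-1$, Proposition~\ref{prop:zeroesOddCycles} for $r=s$, and the count $\binom{s}{r}(q-1)^r$ of polynomials with exactly $r$ non-zero terms. Your alternating-series check that the weights $w_r$ are pairwise distinct and minimized only at $r=2$ is a detail the paper leaves implicit. Note also that passing from counting polynomials to counting codewords tacitly uses injectivity of ${\rm ev}$, which holds by Lemma~\ref{lem:EdgeCodesParameters}(2).
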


\subsection{Even cycles} In the case of polynomials associated to cycles of odd length, we have shown in the previous subsection that all of them have the same number of zeroes in the corresponding affine torus, regardless of their coefficients. However, this is no longer true when considering cycles of even length. In this case, we need to distinguish two big families of polynomials depending on certain condition on the coefficients. We will show that all polynomials in each family have the same number of zeroes in the affine torus. The first example illustrating this behavior corresponds to the cycle of length four.

\begin{prop}\label{Prop:zeroesC4}
    Let $f=\alpha_{1,2}t_1t_2+\alpha_{2,3}t_2t_3+\alpha_{3,4}t_3t_4+\alpha_{1,4}t_1t_4$ be a polynomial associated to $C_4$, with $\alpha_{i,j}\in\mathbb F_q^*$ for all $i,j$. Then

    \[
    |V_{(\mathbb F_q^*)^4}(f)|=\begin{cases}
    (q-1)^3+(q-1)^2(q-2) & \text{if}\  \alpha_{1,2}\alpha_{3,4}-\alpha_{2,3}\alpha_{1,4}=0 , \ \text{and}\\
        (q-1)^2(q-3) & \text{otherwise}.
    \end{cases}
    \] 
\end{prop}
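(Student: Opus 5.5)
The plan is to group the terms of $f$ so that it is linear in one pair of opposite vertices of the $4$-cycle, here $t_2$ and $t_4$, and then count the zeroes fibrewise over the other pair $(t_1,t_3)\in(\mathbb F_q^*)^2$. This is the same idea as in Proposition \ref{prop:RecursiveFormulaCycles}, but it avoids reducing to a smaller cycle. Concretely, write
$$f = t_2\,L_1(t_1,t_3) + t_4\,L_2(t_1,t_3),\qquad L_1=\alpha_{1,2}t_1+\alpha_{2,3}t_3,\quad L_2=\alpha_{1,4}t_1+\alpha_{3,4}t_3 .$$
For a fixed $(t_1,t_3)\in(\mathbb F_q^*)^2$ I will count the pairs $(t_2,t_4)\in(\mathbb F_q^*)^2$ with $t_2L_1+t_4L_2=0$. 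There are three possibilities.
\begin{enumerate}
\item If $L_1=L_2=0$, then every pair $(t_2,t_4)$ works, giving $(q-1)^2$ zeroes.
\item If exactly one of $L_1,L_2$ vanishes, then the remaining term is a product of nonzero elements, so there are no zeroes.
\item If $L_1,L_2\neq 0$, then $t_4\in\mathbb F_q^*$ is free and $t_2=-L_2t_4/L_1$ is automatically nonzero, giving $q-1$ zeroes.
\end{enumerate}

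The next step is to count how many $(t_1,t_3)$ fall into each case, and this is where the determinant $D=\alpha_{1,2}\alpha_{3,4}-\alpha_{2,3}\alpha_{1,4}$ enters. Each of $L_1=0$ and $L_2=0$ separately defines a line through the origin. Since all coefficients are nonzero, each line meets $(\mathbb F_q^*)^2$ in exactly $q-1$ points: for each $t_1$ there is exactly one $t_3$, and it is nonzero.

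If $D=0$, the two linear forms are proportional, so the two lines coincide. Then case (1) occurs for $q-1$ pairs, case (2) never occurs, and case (3) occurs for the remaining $(q-1)^2-(q-1)=(q-1)(q-2)$ pairs. This gives
$$|V_{(\mathbb F_q^*)^4}(f)| = (q-1)(q-1)^2+(q-1)(q-2)(q-1).$$

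If $D\neq0$, the only common zero of $L_1$ and $L_2$ is $t_1=t_3=0$, which is not in the torus, so the two lines are disjoint on the torus. Then case (1) never occurs, case (2) occurs for $2(q-1)$ pairs, and case (3) occurs for the remaining $(q-1)^2-2(q-1)=(q-1)(q-3)$ pairs. This gives
$$|V_{(\mathbb F_q^*)^4}(f)| = (q-1)(q-3)(q-1)=(q-1)^2(q-3).$$

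The only point needing care is the second step: verifying that each line $L_i=0$ meets the torus in exactly $q-1$ points, and that the coincident versus disjoint dichotomy is governed precisely by $D$. Both follow from elementary linear algebra over $\mathbb F_q$ together with the fact that all the $\alpha_{i,j}$ are nonzero. The rest is routine arithmetic.
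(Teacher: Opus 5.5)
Your proof is correct and is essentially the paper's argument with the roles of the opposite vertex pairs $\{t_1,t_3\}$ and $\{t_2,t_4\}$ interchanged. The paper writes $f=t_1(\alpha_{1,2}t_2+\alpha_{1,4}t_4)+t_3(\alpha_{2,3}t_2+\alpha_{3,4}t_4)$ and makes your case split: both linear forms vanish (possible only when $\alpha_{1,2}\alpha_{3,4}-\alpha_{2,3}\alpha_{1,4}=0$, giving $(q-1)^3$ zeroes), or both are nonzero, with $t_1$ determined and $t_2$ having one or two forbidden values relative to $t_4$. Your explicit check that the case where exactly one form vanishes contributes no zeroes makes visible a step the paper leaves implicit.
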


\begin{proof}
Since $f=t_1(\alpha_{1,2}t_2+\alpha_{1,4}t_4)+t_3(\alpha_{2,3}t_2+\alpha_{3,4}t_4)$, it is clear that $f=0$ if and only if one of the two following situations holds:  

\noindent \underline{Case (1)}: $\alpha_{1,2}t_2+\alpha_{1,4}t_4=0$ and $\alpha_{2,3}t_2+\alpha_{3,4}t_{4}=0$.

This system of linear equations has a non-trivial solution 
if and only if $\alpha_{1,2}\alpha_{3,4}-\alpha_{2,3}\alpha_{1,4}=0$.
The number of zeroes of $f$ corresponding to this situation is $|V_{(\mathbb F_q^*)^4,1}(f)|=(q-1)^3$ since the value of $t_2$ is determined by the value of $t_4$.

\noindent \underline{Case (2)}: $\alpha_{1,2}t_2+\alpha_{1,4}t_4\neq0$, $\alpha_{2,3}t_2+\alpha_{3,4}t_{4}\neq0$ and $t_1=-\frac{t_3(\alpha_{2,3}t_2+\alpha_{3,4}t_4)}{\alpha_{1,2}t_2+\alpha_{1,4}t_4}$.

In this situation, $t_1$ will be determined by the remaining variables and we just have to consider the two conditions $t_2\neq-\frac{\alpha_{1,4}}{\alpha_{1,2}}t_4$ and $t_2\neq -\frac{\alpha_{3,4}}{\alpha_{2,3}}t_4$.

If $\frac{\alpha_{1,4}}{\alpha_{1,2}}=\frac{\alpha_{3,4}}{\alpha_{2,3}}$ (which is equivalent to the condition $\alpha_{1,2}\alpha_{3,4}-\alpha_{2,3}\alpha_{1,4}=0$ appearing in \underline{Case (1)}), then we only have one restriction for $t_2$ and the number of zeroes of $f$ corresponding to this situation is $|V_{(\mathbb F_q^*)^4,2.1}(f)|=(q-1)^2(q-2)$.

If $\frac{\alpha_{1,4}}{\alpha_{1,2}}\neq\frac{\alpha_{3,4}}{\alpha_{2,3}}$, then we have two restrictions for $t_2$ and the number of zeroes of $f$ corresponding to this situation is $|V_{(\mathbb F_q^*)^4,2.2}(f)|=(q-1)^2(q-3)$.

\medskip

Combining the two situations, we get that \begin{eqnarray*}
|V_{(\mathbb F_q^*)^4}(f)|  = \begin{cases}
    |V_{(\mathbb F_q^*)^4,1}(f)|+|V_{(\mathbb F_q^*)^4,2.1}(f)| & \text{if}\  \alpha_{1,2}\alpha_{3,4}-\alpha_{2,3}\alpha_{1,4}=0 , \ \text{and}\\
        |V_{(\mathbb F_q^*)^4,2.2}(f)| & \text{otherwise},
\end{cases}
\end{eqnarray*}
\noindent which is the desired result.
\end{proof}

Using Propositions \ref{prop:RecursiveFormulaCycles} and \ref{Prop:zeroesC4}, we can obtain formulas for the number of zeroes of all polynomials corresponding to even cycles:

\begin{prop}\label{prop:zeroesEvenCycles}
    Let $s=2k$ be an even number, and let $$f_s=\alpha_{1,2}^st_1t_2+\alpha_{2,3}^st_2t_3+\dots+\alpha_{s-1,s}^st_{s-1}t_s+\alpha_{1,s}^st_1t_s, \text{ with } \alpha_{i,j}^s\in\mathbb F_q^* \text{ for all } i,j,$$ be a polynomial associated to the cycle of length $s$. 
    
    If we denote $d_m=\left(\sum_{i=1}^{m-2}(-1)^{i+1}(q-1)^{m-i}\right)-(q-1)^{m-2}$, then

    \[
    |V_{(\mathbb F_q^*)^s}(f_s)|=\begin{cases}
        q^{k-2}[(q-1)^3+(q-1)^2(q-2)]+\sum_{i=0}^{k-3}q^i d_{s-2i} \\ \text{ if } \prod_{\substack{i\in\{1,\dots,2k-1\}\\ i\ odd}} \alpha_{i,i+1}^s + (-1)^{k-1}\alpha_{1,s}^s\prod_{\substack{i\in\{1,\dots,2k-2\}\\ i\ even}}\alpha_{i,i+1}^s=0, \ and \vspace{5mm}\\
         
        q^{k-2}(q-1)^2(q-3)+\sum_{i=0}^{k-3}q^i d_{s-2i}\ \ otherwise.
    \end{cases}
    \]    
\end{prop}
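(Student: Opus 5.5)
We argue by induction on $k\geq 2$, unrolling the recursion of Proposition \ref{prop:RecursiveFormulaCycles}. Write $V_s=|V_{(\mathbb F_q^*)^s}(f_s)|$. For $s\geq 6$ even, Proposition \ref{prop:RecursiveFormulaCycles} gives $V_s=qV_{s-2}+d_s$, since $d_s=\sum_{i=1}^{s-2}(-1)^{i+1}(q-1)^{s-i}-(q-1)^{s-2}$ is exactly the additive term in that recursion. The coefficients of $f_{s-2}$ are determined by those of $f_s$: $\alpha^{s-2}_{i,i+1}=\alpha^s_{i+2,i+3}$ and $\alpha^{s-2}_{1,s-2}=-\alpha^s_{2,3}\alpha^s_{1,s}/\alpha^s_{1,2}$. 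Iterating $k-2$ times, $f_s$ reduces to a polynomial $f_4$ on $C_4$, and
$$V_s=q^{k-2}V_4+\sum_{i=0}^{k-3}q^id_{s-2i},$$
which follows directly by induction: $qV_{s-2}+d_s$ with $V_{s-2}=q^{k-3}V_4+\sum_{i=0}^{k-4}q^id_{s-2-2i}$. The base case $k=2$ is Proposition \ref{Prop:zeroesC4}, with an empty sum. Substituting the two values of $V_4$ from Proposition \ref{Prop:zeroesC4} gives the two cases of the statement.

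It remains to check that the degeneracy condition $\alpha_{1,2}\alpha_{3,4}-\alpha_{2,3}\alpha_{1,4}=0$ for the final $f_4$ matches the stated condition on the coefficients of $f_s$. Define
$$\Phi_s(f_s)=\prod_{i\ \mathrm{odd}}\alpha^s_{i,i+1}+(-1)^{k-1}\alpha^s_{1,s}\prod_{i\ \mathrm{even},\,i\leq 2k-2}\alpha^s_{i,i+1}.$$
For $k=2$ this is $\alpha_{1,2}\alpha_{3,4}-\alpha_{1,4}\alpha_{2,3}$, the $C_4$ condition. For the inductive step, I will show $\Phi_s(f_s)=\alpha^s_{1,2}\,\Phi_{s-2}(f_{s-2})$, or at least that they agree up to a nonzero factor.

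To verify this, substitute the coefficients of $f_{s-2}$. The odd-index product in $\Phi_{s-2}$ is $\prod_{j\ \mathrm{odd},\,j\leq 2k-3}\alpha^s_{j+2,j+3}$, which is the odd-index product for $f_s$ with $\alpha^s_{1,2}$ removed. The even-index product in $\Phi_{s-2}$ is $\prod_{j\ \mathrm{even},\,j\leq 2k-4}\alpha^s_{j+2,j+3}$, which is the even-index product for $f_s$ with $\alpha^s_{2,3}$ removed. The second term of $\Phi_{s-2}$ therefore becomes
$$(-1)^{k-2}\Bigl(-\tfrac{\alpha^s_{2,3}\alpha^s_{1,s}}{\alpha^s_{1,2}}\Bigr)\prod_{j\ \mathrm{even},\,j\leq 2k-4}\alpha^s_{j+2,j+3}=\frac{(-1)^{k-1}}{\alpha^s_{1,2}}\,\alpha^s_{1,s}\prod_{i\ \mathrm{even},\,i\leq 2k-2}\alpha^s_{i,i+1}.$$
Multiplying by $\alpha^s_{1,2}\neq 0$ gives $\Phi_s(f_s)=\alpha^s_{1,2}\Phi_{s-2}(f_{s-2})$. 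Hence $\Phi_s(f_s)=0$ if and only if $\Phi_{s-2}(f_{s-2})=0$, and by induction if and only if the terminal $f_4$ satisfies the degenerate condition of Proposition \ref{Prop:zeroesC4}.

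The main obstacle is this sign and index bookkeeping: making sure the new edge $\{1,s-2\}$ is counted in the even group with the correct sign $(-1)^{k-1}$, and that the relabelling $i\mapsto i+2$ maps parity classes correctly. Everything else is a direct iteration of the recursion.
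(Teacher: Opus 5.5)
Your proof is correct and follows the paper's argument: induction on $k$ using the recursion $|V_{(\mathbb F_q^*)^{2k}}(f_{2k})| = q\,|V_{(\mathbb F_q^*)^{2k-2}}(f_{2k-2})| + d_{2k}$, with the $C_4$ proposition as the base case. The one place you go beyond the paper is the explicit identity $\Phi_s(f_s)=\alpha^s_{1,2}\,\Phi_{s-2}(f_{s-2})$, which the paper only asserts (``by taking into account the description of the coefficients''); your index and sign bookkeeping there is correct.
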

\begin{proof}
    We proceed by induction on $k$.  If $k=2$, this result is just 
    Proposition \ref{Prop:zeroesC4}.

    Assume now that the result is true for certain $k-1\geq 2$, and let us prove it for $k$. By Proposition \ref{prop:RecursiveFormulaCycles} we have that $|V_{(\mathbb F_q^*)^{2k}}(f_{2k})|=q\ |V_{(\mathbb F_q^*)^{2k-2}}(f_{2k-2})|+d_{2k}$, where $f_{2k-2}$ satisfies that $\alpha_{i,i+1}^{2k-2}=\alpha_{i+2,i+3}^{2k}$ for all $i\in\{1,2,\dots,2k-3\}$ and $\alpha_{1,2k-2}^{2k-2}=-\frac{\alpha_{2,3}^{2k}\alpha_{1,2k}^{2k}}{\alpha_{1,2}^{2k}}$. By the induction hypothesis, we can distinguish two different possibilities for $|V_{(\mathbb F_q^*)^{2k-2}}(f_{2k-2})|$:

    \noindent \underline{Case (1)}: If $\prod_{\substack{i\in\{1,\dots,2k-3\}\\ i\ odd}} \alpha_{i,i+1}^{2k-2} + (-1)^{k-2}\alpha_{1,2k-2}^{2k-2}\prod_{\substack{i\in\{1,\dots,2k-4\}\\ i \ even}}\alpha_{i,i+1}^{2k-2}=0$, which is equivalent to $\prod_{\substack{i\in\{1,\dots,2k-1\}\\ i\ odd}} \alpha_{i,i+1}^{2k} + (-1)^{k-1}\alpha_{1,2k}^{2k}\prod_{\substack{i\in\{1,\dots,2k-2\}\\ i\ even}}\alpha_{i,i+1}^{2k}=0$ by taking into account the description of the coefficients $\alpha_{i,j}^{2k-2}$, then \[|V_{(\mathbb F_q^*)^{2k-2}}(f_{2k-2})|=
        q^{k-3}[(q-1)^3+(q-1)^2(q-2)]+\sum_{i=0}^{k-4}q^i d_{2k-2-2i}\,.\]

Therefore, in this case
\begin{eqnarray*}
    |V_{(\mathbb F_q^*)^{2k}}(f_{2k})| &=& q\ |V_{(\mathbb F_q^*)^{2k-2}}(f_{2k-2})|+d_{2k}\ =\\
    &=& q^{k-2}[(q-1)^3+(q-1)^2(q-2)]+\sum_{i=0}^{k-4}q^{i+1}d_{2k-2(i+1)} + d_{2k}\ = \\
    &=& q^{k-2}[(q-1)^3+(q-1)^2(q-2)]+\sum_{i=0}^{k-3}q^{i}d_{2k-2i}\,.
\end{eqnarray*}

\noindent 
\underline{Case (2)}: If the condition in \underline{Case (1)} does not hold, then 
\[|V_{(\mathbb F_q^*)^{2k-2}}(f_{2k-2})|=
        q^{k-3}(q-1)^2(q-3)+\sum_{i=0}^{k-4}q^i d_{2k-2-2i}\] 
and similarly as we did above, we get 
$|V_{(\mathbb F_q^*)^{2k}}(f_{2k})| = q^{k-2}(q-1)^2(q-3)+\sum_{i=0}^{k-3}q^{i}d_{2k-2i}\,.$
\end{proof}

It is clear from the previous proposition that the value of $|V_{(\mathbb F_q^*)^s}(f_s)|$ is larger when the condition $\prod_{\substack{i\in\{1,\dots,2k-1\}\\ i\ odd}} \alpha_{i,i+1}^s + (-1)^{k-1}\alpha_{1,s}^s\prod_{\substack{i\in\{1,\dots,2k-2\}\\ i\ even}}\alpha_{i,i+1}^s=0$ is satisfied. Now, to determine the minimum distance of $\mathcal{C}_{C_s}$ in this case we first need to determine the largest among the two quantities  $q^{k-2}[(q-1)^3+(q-1)^2(q-2)]+\sum_{i=0}^{k-3}q^i d_{s-2i}$ and $(q-1)^{s-1}$, where the latter corresponds to the number of zeroes of all polynomials with two non-zero terms. In order to compare these two quantities, we first present the following technical lemma which allows us to obtain a more explicit representation of the first quantity.

\begin{lem}\label{lem:TechnicalLemma}
    If we denote $d_m=\left(\sum_{i=1}^{m-2}(-1)^{i+1}(q-1)^{m-i}\right)-(q-1)^{m-2}$, then $$\sum_{i=0}^{k-3}q^i d_{2k-2i}=\frac{(q-1)^{2k}-(q-1)(q^{k-2}-1)}{q}-q^{k-3}(q-1)^4\,.$$
\end{lem}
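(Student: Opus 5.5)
First I would put $d_m$ in closed form. Write $x=q-1$. The alternating sum is geometric:
$$\sum_{i=1}^{m-2}(-1)^{i+1}x^{m-i}=\frac{x^m-(-1)^{m}x^{2}}{x+1},$$
after factoring out $x^{m-1}$ and summing the ratio $-1/x$ over $m-2$ terms. Since $x+1=q$, this gives
$$d_m=\frac{x^m-(-1)^m x^2}{q}-x^{m-2}.$$
Only even $m=2k-2i$ occur in the lemma, so $(-1)^m=1$ and
$$d_{2k-2i}=\frac{x^{2k-2i}-x^2}{q}-x^{2k-2i-2}.$$

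Next I would multiply by $q^i$ and sum over $i=0,\dots,k-3$, splitting into three pieces:
$$S_1=\sum_{i=0}^{k-3} q^{i-1}x^{2k-2i},\qquad S_2=-x^2\sum_{i=0}^{k-3}q^{i-1},\qquad S_3=-\sum_{i=0}^{k-3}q^{i}x^{2k-2i-2}.$$
The useful observation is that $S_1$ and $S_3$ telescope against each other. Re-index $S_3$ with $j=i+1$; then $S_3=-\sum_{j=1}^{k-2}q^{j-1}x^{2k-2j}$. In $S_1+S_3$ the terms for $j=1,\dots,k-3$ cancel. What remains is the $i=0$ term of $S_1$, namely $x^{2k}/q$, and the $j=k-2$ term of $S_3$, namely $-q^{k-3}x^4$.

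The piece $S_2$ is a geometric sum:
$$S_2=-\frac{x^2}{q}\cdot\frac{q^{k-2}-1}{q-1}=-\frac{x(q^{k-2}-1)}{q},$$
using $x^2/(q-1)=x$. Adding the three pieces gives
$$\frac{(q-1)^{2k}-(q-1)(q^{k-2}-1)}{q}-q^{k-3}(q-1)^4,$$
which is the claimed formula.

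The main obstacle is bookkeeping: getting the sign $(-1)^m$ and the number of terms in the geometric sum for $d_m$ right, and lining up the index ranges in the telescoping step. I would check these by testing the edge case $k=3$ ($s=6$) directly. There the sum is the single term $d_6$, and both sides should equal $\frac{x^6-x^2}{q}-x^4$.
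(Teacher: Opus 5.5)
Your proof is correct and follows essentially the paper's route: the same closed form $d_{2k-2i}=\frac{(q-1)^{2k-2i}-(q-1)^2}{q}-(q-1)^{2k-2i-2}$ from the alternating geometric sum, then the same geometric sum for the $(q-1)^2$ piece. The only difference is that you telescope your first and third pieces, whereas the paper merges them into one geometric series with ratio $q/(q-1)^2$; your version thereby avoids dividing by $q-(q-1)^2$ and is slightly cleaner.
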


\begin{proof}
Let $r=q-1$. Since
\begin{eqnarray*}
    \sum_{i=1}^{m-2}(-1)^{i+1}r^{m-i}=r^{m-1}\sum_{i=0}^{m-3}\left(-\frac{1}{r}\right)^i=\frac{r^m+(-1)^{m-1}r^2}{q}\,,
\end{eqnarray*} 
then $$d_{2k-2i}=\frac{r^{2k-2i}-r^2}{q}-r^{2k-2i-2}=\frac{r^{2k-2i-2}(r^2-q)-r^2}{q}\,.$$ 

To finish, \begin{eqnarray*}
    \sum_{i=0}^{k-3}q^i d_{2k-2i}&=&\sum_{i=0}^{k-3}q^i\left(\frac{r^{2k-2i-2}(r^2-q)-r^2}{q} \right)\ =\\
    &=&\frac{r^2-q}{q}\,r^{2k-2}\,\sum_{i=0}^{k-3}\left(\frac{q}{r^2}\right)^i-\frac{r^2}{q}\sum_{i=0}^{k-3}q^i\ =\\
    & = & \frac{r^2-q}{q}\,r^{2k-2}\,\frac{r^2}{q-r^2}\cdot\frac{q^{k-2}-r^{2k-4}}{r^{2k-4}}
    -\frac{r^2}{q}\cdot\frac{q^{k-2}-1}{q-1}\ =
    \\
    &=&\frac{r^{2k}}{q}-r^4q^{k-3}-\frac{r}{q}(q^{k-2}-1)\,.
 \end{eqnarray*}\end{proof}

If we now combine Lemmas \ref{lem:EdgeCodesParameters} and \ref{lem:TechnicalLemma} and Propositions \ref{prop:DeliozeroesTrees} and \ref{prop:zeroesEvenCycles}, we can completely determine the weight distribution of the edge code associated to an even cycle:

\begin{thm}\label{thm:WeightDistributionEvenCycles}
    Let $s=2k\geq 4$ be an even number. Then the weight distribution of $\mathcal C_{C_s}$ can be expressed as follows:
    \begin{eqnarray*}
        \text{ for every }r\in\{1,2,\dots,s-1\},\ A_{\sum_{i=0}^{r-1}(-1)^i(q-1)^{s-i}}({\mathcal C}_{C_s})&=&{s\choose r}(q-1)^r,\\
        A_{(q-1)^s-q^{k-2}[(q-1)^3+(q-1)^2(q-2)]-\frac{(q-1)^s-(q-1)(q^{k-2}-1)-q^{k-2}(q-1)^4}{q}}(\mathcal C_{C_s})&=& (q-1)^{s-1},\\
        A_{(q-1)^s-q^{k-2}(q-1)^2(q-3)-\frac{(q-1)^s-(q-1)(q^{k-2}-1)-q^{k-2}(q-1)^4}{q}}(\mathcal C_{C_s})&=& (q-1)^s-(q-1)^{s-1},\text{ and}
    \end{eqnarray*} $A_j(\mathcal C_{C_s})=0$ for the remaining values of $j$.
.

    Consequently, the minimum distance of $\mathcal C_{C_s}$ is \begin{eqnarray*}
        \delta(\mathcal{C}_{C_s})=\begin{cases}
            (q-1)^s-q^{k-2}[(q-1)^3+(q-1)^2(q-2)]-\frac{(q-1)^{s}-(q-1)(q^{k-2}-1)}{q}+q^{k-3}(q-1)^4 \\
            \ \ \text{ if } q^k(q-2)+1\geq (q-1)^{2k-2},\text{ and}    \vspace{5mm}\\

            (q-1)^{s-1}(q-2) \text{ otherwise}.
        \end{cases}
    \end{eqnarray*}
\end{thm}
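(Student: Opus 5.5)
I will partition $KE(C_s)$ by the number $r$ of non-zero terms, and for each $r$ read off $|V_T(f)|$ from an earlier result. The Hamming weight is then $(q-1)^s-|V_T(f)|$ by Lemma \ref{lem:EdgeCodesParameters}.

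For $1\le r\le s-1$, the edges carrying the non-zero terms of $f$ form a forest. As observed at the start of Section \ref{sec:cycles}, Proposition \ref{prop:DeliozeroesTrees} (or Corollary \ref{cor:treeFormula}, applied via Theorem \ref{thm:attachingtrees} after deleting isolated edges) gives $|V_T(f)|=\sum_{i=1}^{r-1}(-1)^{i+1}(q-1)^{s-i}$. Hence the weight is $\sum_{i=0}^{r-1}(-1)^i(q-1)^{s-i}$. There are $\binom{s}{r}(q-1)^r$ such polynomials, since we choose the $r$ edges and then the non-zero coefficients.

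For $r=s$, Proposition \ref{prop:zeroesEvenCycles} gives two values, and Lemma \ref{lem:TechnicalLemma} rewrites $\sum_{i=0}^{k-3}q^i d_{s-2i}$ in closed form, which produces the two stated weights. It remains to count the coefficient vectors $(\alpha_{i,j})\in(\mathbb F_q^*)^s$ satisfying the condition
\[
\prod_{i\ \mathrm{odd}}\alpha_{i,i+1}+(-1)^{k-1}\alpha_{1,s}\prod_{i\ \mathrm{even}}\alpha_{i,i+1}=0 .
\]
Fixing all coefficients except $\alpha_{1,s}$, this equation determines $\alpha_{1,s}$ uniquely, and the value is non-zero. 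So exactly $(q-1)^{s-1}$ vectors satisfy the condition and $(q-1)^s-(q-1)^{s-1}$ do not.

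Several different $r$ could a priori give the same weight, and the statement implicitly treats the listed weights as distinct. I would check this from the explicit formulas; for $q>2$ they are all distinct except possibly in degenerate coincidences. This step, together with the minimum distance, is where care is needed.

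For the minimum distance, the candidates are as follows.
\begin{itemize}
\item The tree weights are minimized at $r=2$, giving $(q-1)^{s-1}(q-2)$, because the alternating partial sums of $|V_T|$ are maximized at $r=2$ with value $(q-1)^{s-1}$.
\item The $r=s$ weight that does not satisfy the condition is larger than the one that does, since $q-3<(q-1)+(q-2)$.
\end{itemize}
So $\delta$ is the smaller of the $r=2$ weight and the $r=s$ weight under the condition. Equivalently, I compare $(q-1)^{s-1}$ with
\[
N=q^{k-2}\bigl[(q-1)^3+(q-1)^2(q-2)\bigr]+\frac{(q-1)^{2k}-(q-1)(q^{k-2}-1)}{q}-q^{k-3}(q-1)^4 .
\]

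The main obstacle is simplifying $N\ge(q-1)^{2k-1}$ to the stated criterion $q^k(q-2)+1\ge(q-1)^{2k-2}$. My plan is to multiply through by $q$, write $(q-1)^{2k}-q(q-1)^{2k-1}=-(q-1)^{2k-1}$, and expand the $q^{k-2}$-terms. Then I collect $q^{k-3}(q-1)^2\bigl[q(q-1)+q(q-2)-(q-1)^2\bigr]=q^{k-3}(q-1)^2(q^2-2)$ together with the $-(q-1)q^{k-2}+(q-1)$ term. After dividing by $(q-1)$, the inequality should reduce to the stated one. I would verify this reduction carefully for $k=2$, where the criterion reads $q^2(q-2)+1\ge(q-1)^2$, and check it against Proposition \ref{Prop:zeroesC4} directly.
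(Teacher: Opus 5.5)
Your plan follows the paper's route. You split $KE(C_s)$ by the number $r$ of non-zero terms and use the tree formula for $r\le s-1$; such an $f$ lies in $KE$ of the spanning path obtained by deleting a missing edge, so Proposition~\ref{prop:DeliozeroesTrees} applies directly. You use Proposition~\ref{prop:zeroesEvenCycles} with Lemma~\ref{lem:TechnicalLemma} for $r=s$, and you reduce $\delta$ to comparing $(q-1)^{s-1}$ with the condition-case value. The count $(q-1)^{s-1}$ obtained by solving for $\alpha_{1,s}$ is correct, as are your two monotonicity remarks; they make explicit what the paper leaves implicit.

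The step you call the main obstacle contains an arithmetic error that breaks it as written. We have $q(q-1)+q(q-2)-(q-1)^2=q^2-q-1$, not $q^2-2$; this is exactly the paper's identity $q^{k-2}[(q-1)^3+(q-1)^2(q-2)]-q^{k-3}(q-1)^4=q^{k-3}(q-1)^2(q^2-q-1)$. Also, after multiplying by $q$ the prefactor is $q^{k-2}$, not $q^{k-3}$. With these corrections you get $q^{k-2}(q-1)^2(q^2-q-1)-(q-1)(q^{k-2}-1)\ge (q-1)^{2k-1}$. Dividing by $q-1$ gives $q^{k-2}\bigl[(q-1)(q^2-q-1)-1\bigr]+1=q^k(q-2)+1\ge (q-1)^{2k-2}$, as required. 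With $q^2-2$ you would instead land on $q^{k-2}(q^3-q^2-2q+1)+1\ge(q-1)^{2k-2}$. Checking only whether the criterion holds at $k=2$ would not expose this, since both versions hold for all $q\ge 2$ there.

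Your worry about distinctness is justified, but the hedge ``distinct except possibly in degenerate coincidences'' cannot be proved, because it fails. For $q=3$, $s=4$, the non-condition polynomials have $|V_T(f)|=(q-1)^2(q-3)=0$, so these $8$ codewords have weight $16=(q-1)^4$, the same as the $r=1$ weight. Thus $A_{16}(\mathcal C_{C_4})=16$ over $\mathbb F_3$, while both the first line (at $r=1$) and the third line assign the value $8$ to that index. The stated distribution is correct only if counts at coinciding weights are added, and the paper's proof does not address this either.

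A clean way to handle all comparisons at once is to use Lemma~\ref{lem:TechnicalLemma} to rewrite the values of $|V_T|$:
\begin{itemize}
\item under the condition, $|V_T(f_s)|=\frac{(q-1)^s+(q-1)(q^k(q-2)+1)}{q}$;
\item otherwise, $|V_T(f_s)|=\frac{(q-1)^s-(q-1)(q^k-1)}{q}$;
\item for $r\le s-1$, $|V_T(f)|=\frac{(q-1)^s-(-1)^{r-1}(q-1)^{s-r+1}}{q}$.
\end{itemize}
The minimum-distance criterion then follows in one line. A coincidence between the non-condition value and a tree value amounts to $q^k-1=(-1)^{r-1}(q-1)^{s-r}$, which is satisfied by $(q,k,r)=(3,2,1)$.
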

\begin{proof}
    It is enough to notice that the first case holds when $$q^{k-2}[(q-1)^3+(q-1)^2(q-2)]+\frac{(q-1)^{s}-(q-1)(q^{k-2}-1)}{q}-q^{k-3}(q-1)^4\geq (q-1)^{s-1}\, , $$ and this is equivalent to $q^k(q-2)+1\geq (q-1)^{2k-2}$ by using the identity \begin{eqnarray*}
        q^{k-2}[(q-1)^3+(q-1)^2(q-2)]-q^{k-3}(q-1)^4&=&q^{k-2}(q-1)^2(2q-3)-q^{k-3}(q-1)^4\ =\\ &=&q^{k-3}(q-1)^2(q^2-q-1).
    \end{eqnarray*} Indeed,
    \begin{eqnarray*}
    q^k(q-2)+1 \geq (q-1)^{2k-2} & \Leftrightarrow & q^k(q-2) +q^{k-2} - q^{k-2} + 1  \geq (q-1)^{2k-2}\\
    &\Leftrightarrow & q^{k-2}(q^2(q-2)+1)-(q^{k-2}-1) \geq (q-1)^{2k-2}\\
    &\Leftrightarrow & q^{k-2}(q-1)(q^2-q-1)-(q^{k-2}-1) \geq (q-1)^{2k-2} \\
    &\Leftrightarrow & q^{k-2}(q-1)^2(q^2-q-1)-(q-1)(q^{k-2}-1) \geq (q-1)^{2k-1}.
    \end{eqnarray*}
    Now using the identity $q(q-1)^{2k-1}-(q-1)^{2k}=(q-1)^{2k-1}$ for 
    the right-hand side and rearranging, we get
    $$q^{k-2}(q-1)^2(q^2-q-1)+(q-1)^{2k} - (q-1)(q^{k-2}-1) \geq q(q-1)^{2k-1}.$$
    Dividing by $q$ then gives
    $$ q^{k-3}(q-1)^2(q^2-q-1)+\frac{(q-1)^{s}-(q-1)(q^{k-2}-1)}{q}\geq (q-1)^{s-1}\, ,$$
    and the first identity now completes the proof.
\end{proof}

\begin{ex}\label{ex:charex}
Recall that we are assuming that $q>2$ throughout this paper.  We 
make this assumption since many of our formulas fail for
$q=2$, partly because the affine torus only has
one point in this case. For example, let $G=C_3$ and $T=(\mathbb F_2^*)^3=\{(1,1,1)\}$. Then we only have two codewords, $(0)$ and $(1)$, but we have several polynomials giving rise to each of them, such as $x_1x_2$, $x_2x_3$ and $x_1x_3$. In this case,
$A_1(\mathcal{C}_G) =1$ and this does not coincide with our formula.
If $q>2$, then different polynomials give rise to different codewords and we do not run into these issues.
\end{ex}

\begin{ex}
In Theorem \ref{thm:WeightDistributionEvenCycles}, the 
condition that $q^k(q-2)+1\geq (q-1)^{2k-2}$ can only
hold for small values of $k$ since the term on the
right-hand side is a polynomial in $q$ of degree $2k-2$,
but the left-hand side is a polynomial of degree $k+1$.  
In fact, if $k \geq 5$, then the inequality only holds
for the prime $q=2$, the case we are excluding.  So
if $s \geq 10$, then $\delta(\mathcal{C}_s) = (q-1)^{s-1}(q-2)$.  If $k =4$, then $q^k(q-2)+1\geq (q-1)^{2k-2}$ 
if and only if $q =2,3$. Because we are excluding the
case $q=2$, when $q=3$ we have 
$$\delta(\mathcal{C}_{C_8}) =
(3-1)^8-3^{4-2}[(3-1)^3+(3-1)^2(3-2)]-\frac{(3-1)^{8}-(3-1)(3^{4-2}-1)}{3}+3^{4-3}(3-1)^4 = 116,
$$ which is less than $(3-1)^7(3-2) = 128$.

Note that this means that when $q=3$, we can find a polynomial
$f \in KE(C_8)$ with $(3-1)^8 - 116 = 140$ zeros.
Indeed, if we take the polynomial
$$f=t_1t_2+t_2t_3+t_3t_4+t_4t_5+t_5t_6+t_6t_7+t_7t_8 + t_1t_8$$
with all coefficients equal to one, then since $k=4$ 
we are in the first condition of Proposition ~\ref{prop:zeroesEvenCycles} and 
$$|V_{(\mathbb{F}_3^*)^8}(f) | = 3^{4-2}[(3-1)^3+(3-1)^2(3-2)]
+ \sum_{i=0}^{4-3}3^id_{8-2i} = 140.$$
This was double checked using {\it Macaulay2}.
\end{ex}

%%%%%%%%%%%%%%%%%%%%%%%%%%%%%%%%%%%%

\section{Edge codes of unicyclic graphs}\label{sec:unicylic}

Using the results in the previous sections, we determine the weight 
distribution of all edge codes associated to unicyclic graphs, 
i.e., to graphs containing exactly one induced cycle. Since these graphs 
can be obtained by successively adding leaves to a cycle, our main tool is Theorem \ref{thm:attachingtrees}.

\begin{thm}\label{thm:zeroesOddUnicyclic}
    Let $G$ be a unicyclic graph on $s$ vertices
    with induced cycle $C_n$ with $n$ odd. Then for any $f\in KE(G)$ with $r$ non-zero terms, 
    $$|V_{(\mathbb F_q^*)^s}(f)|=\sum_{i=1}^{r-1}(-1)^{i+1}(q-1)^{s-i}\,.$$
    
     Then the weight distribution of $\mathcal C_{G}$ can be expressed as follows: \begin{eqnarray*}
        A_{\sum_{i=0}^{r-1}(-1)^i(q-1)^{s-i}}({\mathcal C}_{G})={s\choose r}(q-1)^r \text{ for every }r\in\{1,2,\dots,s\},
    \end{eqnarray*} and $A_j(\mathcal C_{G})=0$ for the remaining values of $j$.

    Consequently, the minimum distance of $\mathcal C_{G}$ corresponds to $r=2$, that is, 
    $$\delta(\mathcal{C}_{G}) = (q-1)^{s-1}(q-2),$$ and it 
    is achieved with all polynomials 
    $f \in KE(G)$ with two non-zero terms.
\end{thm}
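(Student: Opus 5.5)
Since $G$ is unicyclic and connected (a unicyclic graph on $s$ vertices with an induced $n$-cycle has exactly $s$ edges), I will view $G$ as $H = C_n \cup T_{i_1}\cup\cdots\cup T_{i_\ell}$, with trees attached at vertices of the cycle, so that $m=s$ and $p=s-n$. I then write any $f\in KE(G)$ as $f=g+g'$, where $g\in KE(C_n)$ has $r_1$ non-zero terms, $g'$ lives on the attached trees with $r_2$ non-zero terms, and $r=r_1+r_2$. Theorem \ref{thm:attachingtrees} gives
$$|V_{(\mathbb F_q^*)^s}(f)| = (-1)^{r_2}(q-1)^{s-n-r_2}|V_{(\mathbb F_q^*)^n}(g)| + \sum_{i=1}^{r_2}(-1)^{i+1}(q-1)^{s-i}.$$
The key input is that, by Theorem \ref{thm:WeightDistributionOddCycles} (for $r_1\ge1$) and trivially for $g=0$ (where $|V(g)|=(q-1)^n$), we have
$$|V_{(\mathbb F_q^*)^n}(g)|=\sum_{j=1}^{r_1-1}(-1)^{j+1}(q-1)^{n-j}$$
for $r_1\geq1$. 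This is independent of coefficients precisely because $n$ is odd.

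Next comes the algebraic step. For $r_1\ge 1$, substitute this expression. Then
$$(-1)^{r_2}(q-1)^{s-n-r_2}\sum_{j=1}^{r_1-1}(-1)^{j+1}(q-1)^{n-j}=\sum_{j=1}^{r_1-1}(-1)^{j+r_2+1}(q-1)^{s-(j+r_2)},$$
and reindexing with $i=j+r_2$ gives the terms $i=r_2+1,\dots,r-1$. Adding the sum for $i=1,\dots,r_2$ yields exactly $\sum_{i=1}^{r-1}(-1)^{i+1}(q-1)^{s-i}$. For $r_1=0$, the first term is $(-1)^{r_2}(q-1)^{s-r_2}$, which cancels the $i=r_2$ summand (whose sign is $(-1)^{r_2+1}$), again leaving $\sum_{i=1}^{r_2-1}=\sum_{i=1}^{r-1}$. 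The case $f=0$, i.e. $r=0$, is excluded. The only care needed is bookkeeping of signs and the edge case $r_1=0$, and this is where I expect any slip to occur.

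For the weight distribution, the codeword of $f$ has weight $(q-1)^s-\sum_{i=1}^{r-1}(-1)^{i+1}(q-1)^{s-i}=\sum_{i=0}^{r-1}(-1)^i(q-1)^{s-i}$. There are $\binom{s}{r}(q-1)^r$ polynomials with exactly $r$ terms, since $|E(G)|=s$. These weights are distinct for distinct $r$: consecutive values differ by $\pm(q-1)^{s-r}\neq0$, and the sequence of partial alternating sums with strictly decreasing magnitudes is strictly interlacing when $q>2$. Thus no two $r$'s collide and $A_j=0$ otherwise. With $q>2$, distinct polynomials give distinct codewords by Lemma \ref{lem:EdgeCodesParameters}(2).

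For the minimum distance, the weights alternate as $(q-1)^s$, $(q-1)^{s-1}(q-2)$, $(q-1)^{s-1}(q-2)+(q-1)^{s-2}$, and so on. Since $q-1\geq2$, the odd-indexed partial sums decrease and the even-indexed ones increase, all lying between $(q-1)^{s-1}(q-2)$ and $(q-1)^s$. Hence the minimum is attained at $r=2$, giving $\delta(\mathcal C_G)=(q-1)^{s-1}(q-2)$.
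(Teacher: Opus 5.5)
Your proof is correct, but it is organized differently from the paper's.

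The paper argues by induction on the number of vertices $s$. The base case $s=n$ is Theorem \ref{thm:WeightDistributionOddCycles}. The inductive step removes a single leaf, applies Lemma \ref{lem:AddingLeaves}, and checks the two cases $\beta=0$ and $\beta\neq 0$ against the induction hypothesis.

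You instead decompose $G$ once as the cycle $C_n$ with trees attached and invoke Theorem \ref{thm:attachingtrees}, which is the route the paper reserves for the even case in Theorem \ref{thm:zeroesEvenUnicyclic}. The whole argument then reduces to one reindexing identity plus the separate check for $r_1=0$. There the $i=r_2$ summand cancels against $(-1)^{r_2}(q-1)^{s-r_2}$.

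Since Theorem \ref{thm:attachingtrees} is itself proved by the same leaf-peeling induction, the two proofs rest on the same ingredients. Yours makes the parallel with the even case explicit. It also isolates exactly where oddness of $n$ enters, namely the coefficient-independence of $|V_{(\mathbb F_q^*)^n}(g)|$. The paper's induction avoids the bookkeeping of splitting $r=r_1+r_2$ and the special case $r_1=0$.

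Both arguments use implicitly that $G$ is connected, so that $|E|=s$ and $G$ is a cycle with pendant trees.

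Your extra remarks fill in points the paper leaves implicit:
\begin{itemize}
\item The weights $\sum_{i=0}^{r-1}(-1)^i(q-1)^{s-i}$ are pairwise distinct for $q>2$, by the strict interlacing of the partial alternating sums. So the counts $\binom{s}{r}(q-1)^r$ really are the values of distinct $A_j$ rather than merging.
\item The minimum over $r$ is attained at $r=2$.
\end{itemize}
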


\begin{proof}
    We denote by $n\leq s$ the length of the induced cycle of $G$ and proceed by induction on $s$.
    If $n=s$, then $G$ is an odd cycle and the result follows from Theorem \ref{thm:WeightDistributionOddCycles}.

    Assume now that the result is true for certain $s\geq n$ and let us prove it for $s+1$. In this case, $G$ can be obtained from a unicyclic graph on $s$ vertices with an odd induced cycle of length $n$ by adding one leaf to any of its vertices. Thus, every $f\in KE(G)$ with $r$ non-zero terms is of the form $f(t_1,\dots,t_{s},t_{s+1})=f'(t_1,\dots,t_s)+\beta t_it_{s+1}$ for some $i\in\{1,\dots,s\}$, $\beta\in\mathbb F_q$ and $f'\in KE(G')$, where $G'$ is a unicyclic graph on $s$ vertices with an odd induced cycle of length $n$.

    By Lemma \ref{lem:AddingLeaves} we have that $$|{V_{(\mathbb F_q^*)^{s+1}}(f)}| = \begin{cases}
    (q-1) |{V_{(\mathbb F_q^*)^s}(f')}| & \text{ if } \beta = 0, \text{ and} \\
    (q-1)^s - |{V_{(\mathbb F_q^*)^s}(f')}| & \text{ if } \beta \neq 0\,.\end{cases}$$
    
    Now we distinguish two possibilities and use the induction hypothesis to finish the first part
    of the proof.
        If $\beta=0$, then $f'$ has $r$ non-zero terms. Consequently,  $$|V_{(\mathbb F_q^*)^{s+1}}(f)|=(q-1)\sum_{i=1}^{r-1}(-1)^{i+1}(q-1)^{s-i}=\sum_{i=1}^{r-1}(-1)^{i+1}(q-1)^{(s+1)-i}\,.$$

    If $\beta\neq 0$, then $f'$ has $r-1$ non-zero terms and $$|V_{(\mathbb F_q^*)^{s+1}}(f)|=(q-1)^s-\sum_{i=1}^{r-2}(-1)^{i+1}(q-1)^{s-i}=\sum_{i=1}^{r-1}(-1)^{i+1}(q-1)^{(s+1)-i}\,.$$

    Because $G$ has $s$ vertices and is
    unicyclic, it has also $s$ edges. 
    The formula for the weight distribution
    now follows from the fact that
    there are $\binom{s}{r}(q-1)^r$ polynomials
    in $KE(G)$ with exactly $r$ non-zero terms.
    \end{proof}

\begin{thm}\label{thm:zeroesEvenUnicyclic}
 Let $G$ be a unicyclic graph on $s$ vertices
    with induced cycle $C_n$ with $n$ even.
    Suppose that after relabeling, the
    induced graph on $\{t_1,\ldots,t_n\}$ is $C_n$
    with edges $\{\{t_i,t_{i+1}\} : i =1,\ldots,n\}
    \cup \{\{t_1,t_n\}\}$.
    Let $f \in KE(G)$ be a polynomial with
    $f = f_n + f'$, where 
    $$f_n=\alpha_{1,2}^nt_1t_2+\alpha_{2,3}^nt_2t_3+\dots+\alpha_{n-1,n}^nt_{n-1}t_n+\alpha_{1,n}^nt_1t_n\text{ and } \alpha_{i,j}^n\in\mathbb F_q \text{ for all } i,j.$$
    Suppose that $f = f_n +f'$ has $r$ non-zero terms.  

    \begin{enumerate}
        \item If at least one of the 
        coefficients of $f_n$ is
        zero, then 
         $$|V_{(\mathbb F_q^*)^s}(f)|=\sum_{i=1}^{r-1}(-1)^{i+1}(q-1)^{s-i}\,.$$
        \item If all the coefficients of $f_n$ are 
        non-zero, then 
         $$|V_{(\mathbb{F}_q^*)^s}(f)| = 
    (-1)^{r-n}(q-1)^{s-r}|V_{(\mathbb{F}_q^*)^n}(f_n)| + 
    \sum_{i=1}^{r-n}(-1)^{i+1}(q-1)^{s-i}\,,$$
    with $|V_{(\mathbb{F}_q^*)^n}(f_n)|$ given
    as in Proposition \ref{prop:zeroesEvenCycles}. 
    \end{enumerate}
\end{thm}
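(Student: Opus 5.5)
Both parts reduce to Theorem \ref{thm:attachingtrees}. The structural input is that a unicyclic graph $G$ with induced cycle $C_n$ on $\{t_1,\ldots,t_n\}$ is $C_n$ with trees $T_{i_1},\ldots,T_{i_m}$ attached at some cycle vertices. Any vertex not on the cycle reaches the cycle through a unique attachment vertex, since otherwise a second cycle would exist. So $G = C_n \cup T_{i_1}\cup\cdots\cup T_{i_m}$ in the notation of Theorem \ref{thm:attachingtrees}, with $p = s-n$. Every $f\in KE(G)$ decomposes uniquely as $f = f_n + f'$ with $f_n\in KE(C_n)$ and $f'\in KE(T_{i_1}\cup\cdots\cup T_{i_m})$.

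For part (2), all $n$ coefficients of $f_n$ are nonzero, so $f'$ has exactly $r-n$ nonzero terms. Applying Theorem \ref{thm:attachingtrees} with $g=f_n$, $m=s$ and $p=s-n$ gives
\[
|V_{(\mathbb F_q^*)^s}(f)| = (-1)^{r-n}(q-1)^{(s-n)-(r-n)}|V_{(\mathbb F_q^*)^n}(f_n)| + \sum_{i=1}^{r-n}(-1)^{i+1}(q-1)^{s-i}.
\]
The exponent simplifies to $s-r$, which is the claimed formula. Here $|V_{(\mathbb F_q^*)^n}(f_n)|$ is given by Proposition \ref{prop:zeroesEvenCycles}, because $f_n$ is a full-support cycle polynomial.

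For part (1), some coefficient of $f_n$ vanishes, say that of the edge $\{t_j,t_{j+1}\}$ (indices mod $n$). Let $G_0$ be $G$ with this edge deleted. Then $G_0$ is connected and acyclic, hence a tree on $s$ vertices, and $f\in KE(G_0)$ with $r$ nonzero terms. If $r\ge 1$, Proposition \ref{prop:DeliozeroesTrees} (or Corollary \ref{cor:treeFormula}) applied to $G_0$ gives $|V_{(\mathbb F_q^*)^s}(f)| = \sum_{i=1}^{r-1}(-1)^{i+1}(q-1)^{s-i}$. The condition $r\le s-1$ holds automatically since $G_0$ has $s-1$ edges.

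Alternatively, one could apply Theorem \ref{thm:attachingtrees} with $g=f_n$ and then insert the tree count for the path-like support of $f_n$. That route requires tracking how many terms $f_n$ has, so the edge-deletion argument is cleaner. It needs only that $KE(G_0)\subseteq KE(G)$ and that the zero count depends only on $f$ as a polynomial, not on the ambient graph with the same vertex set.

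There are two places needing care. The first is justifying the decomposition of a unicyclic graph into a cycle with attached trees, which is routine. The second is the degenerate case $r=0$ in part (1), where $f=0$ and the formula should be read with the convention $|V|=(q-1)^s$ or excluded. I expect no substantial obstacle: the real content lies in Theorem \ref{thm:attachingtrees} and Proposition \ref{prop:zeroesEvenCycles}, which are assumed.
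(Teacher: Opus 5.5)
Your proposal is correct and follows the paper's proof. For part (1) you delete a cycle edge whose coefficient vanishes, which gives a spanning tree, and apply the tree formula; for part (2) you apply Theorem \ref{thm:attachingtrees} with $g=f_n$, $g'=f'$, $m=s$, $p=s-n$. Your caveat about $r=0$ in part (1) is also valid, and the paper leaves it implicit: there $f=0$, so $V_{(\mathbb F_q^*)^s}(f)$ is the whole torus and the stated sum does not apply.
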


\begin{proof}
    Let $f = f_n +f' \in KE(G)$ be a polynomial
    with $r$ non-zero terms.
    
    If at least one of the coefficients of $f_n$ 
    is zero, then the terms of $f$ correspond to 
    a tree with $r$ edges.  More precisely, if
    the term $t_it_{i+1}$ (or $t_1t_n$) 
    does not appear in $f_n$,
    then $f$ can be viewed as a polynomial in
    $KE(G')$ where $G'$ is the graph $G$ with the
    edge $\{t_i,t_{i+1}\}$ (or $\{t_1,t_n\}$) removed.  Then $G'$ is a
    tree, also on $s$ vertices, so the result
    then follows by Proposition \ref{prop:DeliozeroesTrees} (or Corollary \ref{cor:treeFormula}).

    Suppose that all the coefficients of
    $f_n$ are non-zero, which means that $f'$ has $r-n$ non-zero terms.  Note that we can view
    $G$ as an $n$-cycle with trees attached at various vertices. In the
    notation of Theorem \ref{thm:attachingtrees}, our
    cycle $C_n$ plays the role of $G$, and we have
    a collection of trees on $s-n$ vertices attached to
    $G$.  Again, translating the statement
    of Theorem \ref{thm:attachingtrees} to our
    situation, we are letting $f_n = g$, $f'=g'$,
    $m=s$ and $p=s-n$.   Making these 
    substitutions, we get the formula in the statement.

    The computation of 
    $|V_{(\mathbb{F}_q^*)^n}(f_n)|$ will depend
    upon the coefficients of $f_n$, as described
    in Proposition \ref{prop:zeroesEvenCycles}.
\end{proof}

\begin{rmk}
By using Theorem \ref{thm:zeroesEvenUnicyclic}
and Theorem \ref{thm:WeightDistributionEvenCycles} one
can explicitly write out the weight 
distribution $A(\mathcal{C}_G)$ also in the case of
unicyclic graphs where the induced cycle has
an even length.  The formulas are almost the 
same as those in Theorem \ref{thm:WeightDistributionEvenCycles},
but one needs to use the facts that
if $1 \leq r \leq n-1$, then there
are $\binom{s}{r}(q-1)^r$ polynomials
with $r$ non-zero terms which correspond to trees
in $G$; if $n \leq r \leq s$, then there are 
$\binom{s}{r}(q-1)^r - \binom{s-n}{r-n}(q-1)^r$ polynomials
with $r$ non-zero terms which correspond to trees
in $G$;
there are $\binom{s-n}{r-n}(q-1)^{r-1}$ polynomials
with $r$ non-zero terms that correspond to
an induced subgraph that contains the cycle
and such that $f_n$ falls into the first case of Proposition
\ref{prop:zeroesEvenCycles}; and 
$\binom{s-n}{r-n}[(q-1)^r-(q-1)^{r-1}]$ polynomials
with $r$ non-zero terms that correspond to an induced subgraph that contains
the cycle and such that
$f_n$ falls into
the other case of Proposition \ref{prop:zeroesEvenCycles}.
We leave the details for an interested reader.
\end{rmk}

%%%%%%%%%%%%%%%%%%%%%%%%%%%%%%%%%%%
\section*{Acknowledgements}
The results in this paper were inspired by 
experiments in {\it Macaulay2} \cite{M2}; some of our code was
created using Claude (Opus 4.7).
Asensio is partially supported by the grant PID2022-137283NB-C22 funded by MICIU/AEI/10.13039/501100011033 and by ERDF/EU; and by European Social Fund Plus, Programa Operativo de Castilla y León and Junta de Castilla y León via its Consejería de Educación (and also via the project with reference CLU-2025-1-02-IMUVA). 
Gaggero is funded by SNF Postdoc.Mobility grant P500PT\textunderscore225436.
Van Tuyl’s research is supported by NSERC Discovery Grant 2024-05299.

\bibliographystyle{plain}
\bibliography{biblio}

\end{document}